\theoremstyle{theorem}
\newtheorem{thm}{Theorem}[section]
\theoremstyle{corollary}
\theoremstyle{lemma}
\newtheorem{lemma}{Lemma}[section]
\theoremstyle{definition}
\newtheorem{definition}{Definition}[section]
\theoremstyle{proposition}
\newtheorem{proposition}{Proposition}[section]
\theoremstyle{proof}
\newtheorem{exm}{Example}[section]
\theoremstyle{remark}
\newcommand{\bel}[1]{\begin{equation}\label{#1}}
\newcommand{\be}{\begin{equation}}
\newcommand{\ba}{\begin{eqnarray}}
\newcommand{\ea}{\end{eqnarray}}
\newcommand{\qe}{\end{equation}}
\begin{document}
\title{Spectra of general hypergraphs}

\author[1, 2]{\rm Anirban Banerjee}
\author[1]{\rm Arnab Char}
\author[1]{\rm Bibhash Mondal}

\affil[1]{Department of Mathematics and Statistics}
\affil[2]{Department of Biological Sciences}
\affil[ ]{Indian Institute of Science Education and Research Kolkata}
\affil[ ]{Mohanpur-741246,  India}
\affil[ ]{\textit {{\scriptsize anirban.banerjee@iiserkol.ac.in, ac13ms134@iiserkol.ac.in,  bm12ip022@iiserkol.ac.in}}}

\maketitle

\begin{abstract}

Here, we show a method to  reconstruct connectivity hypermatrices of a general hypergraph (without any self loop or multiple edge) using tensor. We also study the   different spectral properties of these hypermatrices and find that  these properties are similar for graphs and uniform hypergraphs. The  representation of a connectivity hypermatrix
that is proposed here can be very useful for the further development in spectral hypergraph theory.
\end{abstract}

\textbf{AMS classification}: 05C65, 15A18\\
\textbf{Keywords}: Hypergraph, Adjacency hypermatrix, Spectral theory of hypergraphs, Laplacian Hypermatrix, normalized Laplacian

\section{Introduction}

Spectral graph theory has a long history behind its development.  In spectral graph theory, we analyse the eigenvalues of a connectivity matrix which is uniquely defined on a graph. Many researchers have had a great interest to study the eigenvalues of different connectivity matrices, such as, adjacency matrix, Laplacian matrix, signless Laplacian matrix, normalized Laplacian matrix, etc. Now, a recent trend has been developed to explore spectral hypergraph theory. Unlike in a graph, an edge of a hypergraph can be constructed with more than two vertices, i.e., the edge set of a hypergraph is the subset of the power set of the vertex set of that hypergraph \cite{Voloshin}. Now, one of the main challenges is to uniquely represent a hypergraph by a connectivity hypermatrix or by a tensor, and vice versa. It is not trivial for a non-uniform hypergraph, where the cardinalities of the edges are not the same. 
Recently, the study of the spectrum of uniform hypergraph becomes popular. 
In a ($m$-) uniform hypergraph, each edge contains the same, ($m$), number of vertices. Thus an $m$-uniform hypergraph of order $n$ can be easily represented by an $m$ order $n$ dimensional connectivity hypermatrix (or tensor). 
In \cite{Cooper2012}, the results  on the spectrum of adjacency matrix of a graph are extended for uniform hypergraphs by using characteristic polynomial.  Spectral properties of adjacency uniform hypermatrix  are deduced from matroids in \cite{Pearson2015}.
In 1993, Fan Chung   defined Laplacian of a uniform hypergraph by considering various homological  aspects of hypergraphs and studied the eigenvalues of the same \cite{chung1993}. In \cite{Hu2013_2, Hu2014, Hu2015, Qi2014, Qi2014_2}, different spectral properties of  Laplacian and signless Laplacian of a uniform hypergraph, defined by using tensor, have been studied.
In 2015, Hu and Qi introduced the normalized  Laplacian of a uniform hypergraph and analyzed its spectral properties \cite{Hu2013}. 
The important tool that has been used in spectral hypergraph theory is tensor. In 2005, Liqun Qi introduced the different eigenvalues of a real supersymmetric tensor \cite{Qi2005}. The various properties of the eigenvalues of a tensor have been studied in \cite{Chang2008, Chang2009, Li2013, Ng2009, Shao2013, shaoshan2013,  Yang2010, YangYang20101}. 
 
 But, still the challenge remains to come up with a mathematical framework to construct a connectivity hypermatrix for a non-uniform hypergraph, such that, based on this connectivity hypermatrix the spectral graph theory for a general hypergraph can be developed. Here, we propose a unique representation of a general hypergraph (without any self loop or multiple edge) by connectivity hypermatrices, such as, adjacency hypermatrix, Laplacian hypermatrix, signless Laplacian hypermatrix, normalized Laplacian hypermatrix and analyze the different spectral properties of these matrices. These properties are very similar with the same for graphs and uniform hypergraphs. Studying the spectrum of a uniform hypergraphs could be considered as a special case of the  spectral graph theory of general hypergraphs.

\section{Preliminary}
Let $\mathbb{R}$ be the set of real numbers. We consider an  $m$ order $n$  dimensional hypermatrix  $A$  having $n^m$ elements from $\mathbb{R}$, 
 where 
 $$A=(a_{i_{1}, {i_{2}}, \dots, {i_{m}}}),   a_{i_{1}, {i_{2}}, \dots, {i_{m}}} \in \mathbb{R} \text{ and } 1\leq i_{1}, i_{2}, \dots, i_{m} \leq n.$$
 Let $x=(x_{1}, x_{2}, \dots, x_{n})\in \mathbb{R}^n$. If we write $x^{m}$ as an $m$ order $n$ dimension hypermatrix with $(i_1,  i_2,  \dots,   i_m)$-th entry $x_{i_1}  x_{i_2}\dots   x_{i_m}$,  then $Ax^{m-1}$, where the multiplication is taken as tensor contraction over all indices, is an $n$ tuple whose $i$-th component is 
 $$ \sum_{i_{2}, i_{3}, \dots,  i_{m}=1}^{n} a_{i i_{2}i_{3}\dots i_{m}} x_{i_{2}}x_{i_{3}} \dots x_{i_{m}} .$$
 
\begin{definition}
  Let $A$ be a nonzero hypermatrix. A pair $(\lambda,  x) \in \mathbb{C} \times (\mathbb{C}^{n}\setminus \{0\})$  is called eigenvalue and  eigenvector (or simply an eigenpair) if they satisfy the following equation
  $$Ax^{m-1}=\lambda x^{[m-1]}.$$ Here,  $x^{[m]}$ is a vector with $i$-th entry $x^{m}_i$.
   We call $(\lambda,  x)$  an $H$-eigenpair (i.e.,  $\lambda$ and $x$ are called $H$-eigenvalue and   $H$-eigenvector,  respectively) if they are both real. An $H$-eigenvalue $\lambda$ is called $H^+  (H^{++})$-eigenvalue  if the corresponding eigenvector 
   $x \in \mathbb{R}_+^n $
   $(\mathbb{R}_{++}^n)$.
\end{definition}

 \begin{definition}
  Let $A$ be a nonzero hypermatrix. A pair $(\lambda,  x) \in \mathbb{C} \times (\mathbb{C}^{n}\setminus \{0\})$  is called an $E$-eigenpair (where $\lambda$ and $x$ are called $E$-eigenvalue and  $E$-eigenvector,  respectively)
  if they satisfy the following equations 
  $$Ax^{m-1}= \lambda x,$$
  $$ \sum_{i=1}^{n} x_{i}^2 =1.$$
  We call $(\lambda,  x)$ a $Z$-eigenpair if both of them are real.
 \end{definition}
From the above definitions it is clear that,  a constant multiplication of an eigenvector is also an eigenvector corresponding to an $H$-eigenvalue,  but,  this is not always true for $E$-eigenvalue and $Z$-eigenvalue.
Now, we recall some results that are used in the next section.
\begin{thm}[\cite{Qi2005}]
The eigenvalues  of $A$ lie in the union of $n$ disks in $\mathbb{C}$. These n disks have the diagonal elements of the supersymmetric tensor as their centers, and the sums of the
absolute values of the off-diagonal elements as their radii.
\end{thm}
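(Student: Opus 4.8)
The plan is to transplant the classical Gershgorin circle theorem to the tensor setting, reading the eigenvalue equation $Ax^{m-1}=\lambda x^{[m-1]}$ componentwise and exploiting the index at which the eigenvector attains its largest modulus. First I would fix an arbitrary eigenpair $(\lambda, x)$; since $x\neq 0$, I may choose an index $p$ with $|x_p|=\max_{1\le i\le n}|x_i|>0$. Writing out the $p$-th component of the eigenvalue equation gives
$$\sum_{i_2,\dots,i_m=1}^{n} a_{p i_2\dots i_m}\, x_{i_2}\cdots x_{i_m}=\lambda\, x_p^{m-1}.$$

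Next I would split off the diagonal entry $a_{pp\dots p}$, which is exactly the coefficient of the term where all summation indices equal $p$, and move it to the right-hand side, so that
$$(\lambda-a_{pp\dots p})\,x_p^{m-1}=\sum_{(i_2,\dots,i_m)\neq(p,\dots,p)} a_{p i_2\dots i_m}\, x_{i_2}\cdots x_{i_m}.$$
Taking absolute values and applying the triangle inequality to the right-hand sum then yields
$$|\lambda-a_{pp\dots p}|\,|x_p|^{m-1}\le \sum_{(i_2,\dots,i_m)\neq(p,\dots,p)} |a_{p i_2\dots i_m}|\,|x_{i_2}|\cdots|x_{i_m}|.$$

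The central estimate is that, by the maximality of $p$, each factor obeys $|x_{i_j}|\le |x_p|$, whence $|x_{i_2}|\cdots|x_{i_m}|\le |x_p|^{m-1}$ for every multi-index occurring in the sum. Substituting this bound and dividing through by $|x_p|^{m-1}$, which is legitimate precisely because $|x_p|>0$, produces
$$|\lambda-a_{pp\dots p}|\le \sum_{(i_2,\dots,i_m)\neq(p,\dots,p)} |a_{p i_2\dots i_m}|.$$
This says exactly that $\lambda$ lies in the $p$-th disk, centered at the diagonal element $a_{pp\dots p}$ with radius equal to the sum of the absolute values of the off-diagonal entries of the $p$-th slice. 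Since $p$ is determined by $x$ and cannot be prescribed in advance, we cannot pin down which disk contains $\lambda$, but we are guaranteed that at least one does; hence every eigenvalue lies in the union of the $n$ disks.

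I expect the only delicate points to be the bookkeeping of the off-diagonal index set and the justification of dividing by $|x_p|^{m-1}$, both of which reduce to the single observation that $x\neq 0$ forces $|x_p|>0$. It is worth noting that the supersymmetry of $A$ is not actually invoked in establishing the inclusion itself—the argument holds for a general hypermatrix—so that hypothesis serves mainly to ensure, via the $H$-eigenvalue structure, that the disks carry their natural spectral interpretation.
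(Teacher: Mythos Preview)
Your argument is correct: it is precisely the Gershgorin-disk argument transplanted to the tensor eigenvalue equation, and every step (choice of the maximal-modulus coordinate, splitting off the diagonal term, bounding each factor by $|x_p|$, and dividing by $|x_p|^{m-1}>0$) is justified as you describe.

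Note, however, that the paper does not supply its own proof of this statement; it is quoted as Theorem~6(a) of Qi~\cite{Qi2005} and used later as a black box (for instance in the proofs of Theorems~3.8 and~3.14). The proof in \cite{Qi2005} is exactly the one you wrote, so there is nothing to compare: your proposal coincides with the original source's argument.
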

The above theorem helps us to  bound  the eigenvalues of a tensor.
\begin{lemma}\label{co-spectral}
Let $ A$ be an $m$ order and $n$ dimensional tensor and $D=diag(d_1,\dots ,d_n)$ be a positive diagonal matrix. Define a new tensor $${B}=A.D^{-(m-1)}.\overbrace{D \dots D}^{m-1} $$ with the entries $$B_{i_{1}i_{2} \dots i_{m}}=A_{i_{1}i_{2} \dots i_{m}}d_{i_1}^{-(m-1)}d_{i_{2}} \dots d_{i_{m}}.$$

Then $A$ and $B$ have the same $H$-eigenvalues.
\end{lemma}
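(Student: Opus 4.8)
The plan is to exhibit an explicit bijection between the $H$-eigenpairs of $B$ and those of $A$ that fixes the eigenvalue $\lambda$, realized by the diagonal scaling $x = Dy$. First I would write both eigenvalue equations componentwise. By the definition of $Ax^{m-1}$ recalled above, $(\lambda, x)$ is an $H$-eigenpair of $A$ exactly when $x \in \mathbb{R}^n \setminus \{0\}$, $\lambda \in \mathbb{R}$, and for every $i$,
\[
\sum_{i_2,\dots,i_m=1}^{n} a_{i i_2 \dots i_m}\, x_{i_2}\cdots x_{i_m} = \lambda\, x_i^{m-1},
\]
and similarly $(\lambda, y)$ is an $H$-eigenpair of $B$ exactly when $y \in \mathbb{R}^n \setminus \{0\}$, $\lambda \in \mathbb{R}$, and $\sum_{i_2,\dots,i_m} b_{i i_2 \dots i_m}\, y_{i_2}\cdots y_{i_m} = \lambda\, y_i^{m-1}$ for all $i$.

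Next I would substitute the definition $b_{i i_2 \dots i_m} = a_{i i_2 \dots i_m}\, d_i^{-(m-1)} d_{i_2}\cdots d_{i_m}$ into the eigenvalue equation for $B$ and absorb the factors $d_{i_2},\dots,d_{i_m}$ into the variables, obtaining for each $i$
\[
d_i^{-(m-1)} \sum_{i_2,\dots,i_m=1}^{n} a_{i i_2 \dots i_m}\, (d_{i_2} y_{i_2})\cdots (d_{i_m} y_{i_m}) = \lambda\, y_i^{m-1}.
\]
Setting $x = Dy$, i.e.\ $x_j = d_j y_j$, the left-hand side is precisely $d_i^{-(m-1)}\,(Ax^{m-1})_i$, while on the right $y_i^{m-1} = (x_i/d_i)^{m-1} = d_i^{-(m-1)} x_i^{m-1}$. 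Since each $d_i > 0$, the common nonzero factor $d_i^{-(m-1)}$ cancels, leaving $(Ax^{m-1})_i = \lambda\, x_i^{m-1}$ for every $i$. Thus $(\lambda, y)$ is an $H$-eigenpair of $B$ if and only if $(\lambda, Dy)$ is an $H$-eigenpair of $A$, with the same $\lambda$.

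Finally I would invoke the fact that $D = \operatorname{diag}(d_1,\dots,d_n)$ is a \emph{positive} (hence real and invertible) diagonal matrix, so the map $y \mapsto Dy$ is a real linear bijection of $\mathbb{R}^n$ that preserves both realness and the property of being nonzero. Consequently it restricts to a bijection between the $H$-eigenpairs of $B$ and those of $A$ preserving the eigenvalue, and in particular $A$ and $B$ have the same set of $H$-eigenvalues.

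The calculation is entirely elementary, so the only place that requires care is the bookkeeping of the diagonal factors: one must check that the single factor $d_i^{-(m-1)}$ carried by the leading index $i$ of $B$ is exactly what is needed to cancel the factor $d_i^{-(m-1)}$ produced when $y_i^{m-1}$ is rewritten in terms of $x_i^{m-1}$, while the remaining $m-1$ factors $d_{i_2}\cdots d_{i_m}$ are each consumed by converting $y_{i_k}$ into $x_{i_k}$. This matching of exponents is the crux of the argument, and it is precisely the reason the scaling is chosen with exponent $-(m-1)$ on the first index and exponent $1$ on the others.
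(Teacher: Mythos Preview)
Your proof is correct. The substitution $x=Dy$ and the componentwise bookkeeping you describe are exactly what is needed, and your remark that positivity of $D$ guarantees that $y\mapsto Dy$ is a real bijection of $\mathbb{R}^n\setminus\{0\}$ cleanly handles the ``$H$'' (reality) part of the statement.

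By way of comparison, the paper does not actually carry out any computation here: its proof consists solely of a pointer to the remarks following Lemma~3.2 in Yang--Yang \cite{Yang2010}. Your argument is therefore a self-contained, elementary unpacking of what that citation stands for. The content is the same diagonal-scaling idea, but what you gain is that a reader need not consult the reference; what the paper's one-line citation buys is brevity and an indication that the result is already known in the tensor literature.
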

\begin{proof}
From the remarks of lemma (3.2) in \cite{Yang2010}.
\end{proof}
Some results of spectral graph theory\footnote{For different spectral properties of a graph see \cite{bapat2010, chung1997}}  also hold for general hypergraphs.
If $\lambda$ is any eigenvalue of an adjacency matrix of a graph $G$ with the maximal degree $\Delta$, then
$\lambda\leqslant\Delta$.
For a $k$-regular graph $k$ is the maximum eigenvalue with a constant eigenvector of the adjacency matrix of that graph. 
 If $\lambda$ and $\mu$  are  the eigenvalues of the adjacency matrices, represent the graphs $G$ and $H$,  respectively,  then $\lambda + \mu$ is also an eigenvalue of the same for  $G \times H$, the  cartesian product of   $G$ and $H$.
All the eigenvalues of a Laplacian matrix of a graph are nonnegative and a very rough upper bound of these eigenvalues is $2\Delta$, whereas, any eigenvalue of a normalized Laplacian matrix of a graph lies in the interval $[0, 2]$. 
 Zero is always an eigenvalue for both, Laplacian and normalized Laplacian matrices, of a graph, with a constant eigenvector. 
If $\mathbb{A}$ and $\mathcal{L}$ are the normalized adjacency matrix and normalized Laplacian matrix, respectively, of a graph (such that $ \mathcal{L} = I - \mathbb{A}$)  then the spectrum of $\mathbb{A}$,    $\sigma(\mathbb{A})=1-\sigma(\mathcal{L})$.
If $M$ is a connectivity matrix of a graph with $r$ connected components then $\sigma(M)=\sigma(M_1)\cup\sigma(M_2)\dots\cup\sigma(M_r)$, where $M_{i}$ is the same connectivity matrix corresponding to the component $i$.

 
\section{Spectral properties of general hypergraphs}

\begin{definition}
 A (general) hypergraph $G$ is a pair $G= (V, E)$ where $V$ is a set of elements called vertices,  and $E$ is a set of non-empty subsets of $V$ called  edges. Therefore,  E is a subset of $\mathcal{P}(V) \setminus\{\emptyset\}$,  where $\mathcal{P}(V)$ is the power set of $V$.
\end{definition}

\begin{exm}\label{Examp:Hypergraph}
 Let $G=(V, E)$,  where $V=\{ 1,  2,  3,  4,  5 \}$ and $E= \big\{ \{ 1 \},  \{ 2,  3 \},  \{ 1,  4,  5 \} \big\}$. Here,  $G$ is a hypergraph of $5$ vertices and $3$ edges.
\end{exm}

\subsection{Adjacency hypermatrix and eigenvalues}
 \begin{definition}
  Let $G=(V, E)$ be the hypergraph where $V= \{ v_{1},  v_{2},  \dots,  v_{n} \}$ and $E = \{ e_{1},  e_{2},  \dots,  e_{k} \}$. Let $m=max\{|e_{i}| : e_{i}\in E\}$ be the maximum cardinality of edges,  $m.c.e(G)$,  of $G$.
  Define the adjacency hypermatrix of $G$ as 
  $$\mathcal{A}_G = (a_{i_{1}i_{2}\dots i_{m}}),  \text{  } 1\leq i_{1}, i_{2}, \dots, i_{m} \leq n.$$
  For all edges $e=\{ v_{l_{1}},  v_{l_{2}},  \dots, v_{l_{s}}\}\in E$ of cardinality $s \leq m$,   
  $$a_{ p_{1}p_{2}\dots p_{m}}=\frac{s}{\alpha},  \text{ where } \alpha =\sum_{k_{1}, k_{2}, \dots, k_{s} \geq 1,  \sum k_{i}=m}  \frac{m!}{k_{1}! k_{2}!\dots k_{s}!}, $$   and   $p_{1}, p_{2},  \dots,  p_{m}$  chosen in all possible way  from $\{l_{1}, l_{2}, \dots, l_{s}\}$ with at least once
  for each element of the set. The other positions of the hypermatrix are zero\footnote{For a similar construction on uniform multi-hypergraph see \cite{Pearson2014}.}.
 \end{definition}
 
 \begin{exm}
  Let $G=(V, E)$ be a hypergraph in example \ref{Examp:Hypergraph}. Here,  the maximum cardinality of edges is
  $3$. The adjacency hypermatrix of $G$ is $\mathcal{A}_G = (a_{i_{1}i_{2}i_{3}})$,  where $1\leq  i_1,  i_2,  i_3 \leq 5$. Here,  $a_{111}=1,  a_{233}=a_{232}=a_{223}=a_{323}=a_{332}=a_{322}=\frac{1}{3},  a_{145}=a_{154}=a_{451}=a_{415}=a_{541}=a_{514}=\frac{1}{2}$,  and the other elements of $\mathcal{A}_G$ are zero.
 \end{exm}

 \begin{definition}
  Let $G=(V, E)$ be a hypergraph. The degree, $d(v)$, of a vertex $v \in V$   is the number of edges consist of $v$.
 \end{definition}

  Let $G=(V, E)$ be a hypergraph,  where $V=\{v_{1},  v_{2},  \dots,  v_{n}\}$ and $E=\{e_{1}, e_{2}, \dots,  e_{k}\}$. Then,  the  degree of a vertex $v_{i}$ is given by
   $$d(v_{i})= \sum_{i_{2}, i_{3}, \dots,  i_{m}=1}^{n} a_{ii_{2}i_{3}\dots i_{m}}.$$
   
 \begin{definition}
 A hypergraph is called $k$-\textit{regular} if every vertex has the same degree $k$.
 \end{definition}
 Now,  we discuss some spectral properties of $\mathcal{A}_G$ of a hypergraph $G$. Some of these properties are very similar as in general graph (i.e.~for a $2$-uniform hypergraph).
  \begin{thm}\label{HEigUB}
  Let $\mu$ be an $H$-eigenvalue of $\mathcal{A}_G$. Then $|\mu|\leq\Delta$,  where $\Delta$ is the maximum degree of $G$.
   \end{thm}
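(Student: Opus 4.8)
The plan is to argue exactly as in the classical graph case, via the maximum-modulus component of an eigenvector, exploiting the fact that every entry of $\mathcal{A}_G$ is nonnegative. Writing the defining equation $\mathcal{A}_G x^{m-1}=\mu\, x^{[m-1]}$ in coordinates, its $i$-th component reads
$$\sum_{i_{2},\dots,i_{m}=1}^{n} a_{i i_{2}\dots i_{m}}\, x_{i_{2}}\cdots x_{i_{m}} \;=\; \mu\, x_{i}^{\,m-1}.$$
Since $x\neq 0$, I would fix an index $i$ for which $|x_{i}|=\max_{1\le j\le n}|x_{j}|>0$ and analyse this particular component.

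First I would take absolute values on both sides and apply the triangle inequality, using that all $a_{i i_{2}\dots i_{m}}\ge 0$ by construction (each nonzero entry equals $s/\alpha>0$). This gives
$$|\mu|\,|x_{i}|^{\,m-1} \;\le\; \sum_{i_{2},\dots,i_{m}=1}^{n} a_{i i_{2}\dots i_{m}}\, |x_{i_{2}}|\cdots |x_{i_{m}}|.$$
Next, because $|x_{i_{j}}|\le |x_{i}|$ for every index, each product $|x_{i_{2}}|\cdots|x_{i_{m}}|$ is at most $|x_{i}|^{\,m-1}$, so the right-hand side is bounded by $|x_{i}|^{\,m-1}\sum_{i_{2},\dots,i_{m}} a_{i i_{2}\dots i_{m}}$. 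The remaining sum is precisely the degree $d(v_{i})$ by the degree formula recorded before the statement, and $d(v_{i})\le\Delta$. Dividing through by $|x_{i}|^{\,m-1}\neq 0$ then yields $|\mu|\le\Delta$.

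The argument has no serious obstacle; the only points requiring care are that the maximizing index genuinely has $x_{i}\neq 0$ (immediate since $x$ is a nonzero vector), that realness of the $H$-eigenpair lets me treat $x_{i}^{\,m-1}$ without worrying about complex arguments, and that the slice-sum $\sum a_{i i_{2}\dots i_{m}}$ equals the degree exactly rather than merely bounding it. As an alternative I could invoke the eigenvalue-disk theorem of \cite{Qi2005} recalled above: since $\mathcal{A}_G$ is supersymmetric with nonnegative entries, every eigenvalue lies in a disk centred at some diagonal entry $a_{i\dots i}\ge 0$ with radius $d(v_{i})-a_{i\dots i}$, so any real $\mu$ satisfies $2a_{i\dots i}-d(v_{i})\le\mu\le d(v_{i})$, and hence $-\Delta\le\mu\le\Delta$. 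I would present the direct max-component proof as the main line, since it is self-contained and mirrors the graph proof most transparently.
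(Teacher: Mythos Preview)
Your main argument is correct and matches the paper's proof essentially line for line: pick the coordinate of maximum modulus, apply the triangle inequality to the eigenvalue equation there, bound each factor by that maximum, and recognise the resulting slice-sum as $d(v_i)\le\Delta$. The only cosmetic difference is that the paper normalises the eigenvector so that $x_p=1$ instead of dividing by $|x_i|^{m-1}$ at the end; your extra remark about the disk theorem is not used in the paper's proof of this statement.
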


 \begin{proof}
   Let $G$ be a hypergraph with $n$ vertices and $m.c.e(G)=m$.  Let $\mu$ be an $H$-eigenvalue of $\mathcal{A}_G=(a_{i_{1}i_{2}\dots i_{m}})$ with  an eigenvector $x =( x_{1}, x_{2}, \dots, x_{n})$. Let $x_{p}=max \{ |x_{1}|,  |x_{2}|,  \dots,  |x_{n}| \}$. Without loss of any generality we can assume that $x_{p}=1$.
Now,  $$|\mu|=|\mu x_{p}^{m-1}|=\bigg|\sum_{i_{2}, i_{3}, \dots,  i_{m}=1}^{n} a_{pi_{2}i_{3}\dots i_{m}} x_{i_{2}}x_{i_{3}}\dots x_{i_{m}}\bigg|$$
$$ \leq \sum_{i_{2}, i_{3}, \dots,  i_{m}=1}^{n} |a_{p i_{2}i_{3}\dots i_{m}}| |x_{p}|^{m-1} =d(v_{p})\leq \Delta.$$
\end{proof}
Thus,  for a $k$-regular hypergraph the theorem (\ref{HEigUB}) implies $|\mu| \leq k$. 

\begin{thm}
 Let $G=(V, E)$ be a $k$-regular hypergraph with $n$ vertices. Then,  $\mathcal{A}_G=(a_{i_{1}i_{2}\dots i_{m}})$  has an $H$-eigenvalue $k$.
 \end{thm}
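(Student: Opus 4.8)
The plan is to exhibit an explicit $H$-eigenvector realizing the eigenvalue $k$, namely the constant (all-ones) vector, in direct analogy with the constant eigenvector for the adjacency matrix of a $k$-regular graph. So I would set $x=(1,1,\dots,1)\in\mathbb{R}^n$ and verify that $(k,x)$ satisfies the defining equation $\mathcal{A}_G x^{m-1}=k\,x^{[m-1]}$.

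First I would compute the left-hand side coordinate by coordinate. For each $i$, the $i$-th component of $\mathcal{A}_G x^{m-1}$ is $\sum_{i_2,\dots,i_m=1}^n a_{ii_2\dots i_m}x_{i_2}\cdots x_{i_m}$. Since every $x_j=1$, each product $x_{i_2}\cdots x_{i_m}$ equals $1$, so the sum collapses to $\sum_{i_2,\dots,i_m=1}^n a_{ii_2\dots i_m}$, which is exactly the degree $d(v_i)$ by the degree formula recorded just before the definition of $k$-regularity. Because $G$ is $k$-regular we have $d(v_i)=k$ for every $i$, and hence $\mathcal{A}_G x^{m-1}=(k,k,\dots,k)$.

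Next I would check the right-hand side. The vector $x^{[m-1]}$ has $i$-th entry $x_i^{m-1}=1^{m-1}=1$, so $k\,x^{[m-1]}=(k,k,\dots,k)$ as well. The two sides therefore agree, giving $\mathcal{A}_G x^{m-1}=k\,x^{[m-1]}$; since $k\in\mathbb{R}$ and $x\in\mathbb{R}^n\setminus\{0\}$, the pair $(k,x)$ is an $H$-eigenpair and $k$ is an $H$-eigenvalue of $\mathcal{A}_G$.

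I do not anticipate a genuine obstacle: the argument is a one-line substitution once the degree formula is invoked, and the only point that needs a moment's care is recognizing that the nonlinear right-hand side $x^{[m-1]}$ reduces to the constant vector precisely because the chosen eigenvector is constant — this is exactly the feature that lets the constant vector work for a hypermatrix just as it does for an ordinary adjacency matrix. It is worth noting that this produces $k$ only as \emph{an} eigenvalue; combined with Theorem \ref{HEigUB}, which gives $|\mu|\le k$ for every $H$-eigenvalue of a $k$-regular hypergraph, it further shows that $k$ is the largest $H$-eigenvalue.
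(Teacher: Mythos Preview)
Your proof is correct and follows essentially the same approach as the paper: both substitute the all-ones vector $x=(1,\dots,1)$ and use the degree formula to conclude that each coordinate of $\mathcal{A}_G x^{m-1}$ equals $k$. Your write-up is simply a bit more explicit in checking the right-hand side and in noting the consequence (via Theorem~\ref{HEigUB}) that $k$ is actually the largest $H$-eigenvalue.
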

 \begin{proof}
  Since,  $G$ is $k$-regular,  then $d(v_{i})=k$ for all $v_{i}\in V$,  $i\in\{1, 2, 3, ..., n\}$.
 Now,  for a vector $x=(1, 1, 1, \dots,  1)\in \mathbb{R}^{n}$, we have
  
 $$\mathcal{A}_Gx^{m-1}= \sum_{i_{2}, i_{3}, \dots i_{m}=1}^{n} a_{ii_{2}i_{3}\dots i_{m}}=k.$$
 Thus the proof.
 \end{proof}

\begin{thm}
  Let $G=(V, E)$ be a $k$-regular hypergraph  with $n$ vertices. Then,   $\mathcal{A}_G=(a_{i_{1}i_{2}\dots i_{m}})$  has a $Z$-eigenvalue  $k(\frac{1}{\sqrt{n}})^{m-2}$.
  \end{thm}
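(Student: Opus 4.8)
The plan is to exhibit an explicit $Z$-eigenpair, exactly as the preceding theorem exhibited an $H$-eigenpair via the all-ones vector. The one structural difference I would exploit is that a $Z$-eigenvector is required to be normalized, so in place of $(1,\dots,1)$ I would take the \emph{normalized} constant vector
$$x=\frac{1}{\sqrt{n}}(1,1,\dots,1)\in\mathbb{R}^n.$$
First I would verify the side condition in the definition of a $Z$-eigenpair: since every coordinate equals $1/\sqrt{n}$, we get $\sum_{i=1}^n x_i^2 = n\cdot\frac{1}{n}=1$, so $x$ is admissible and moreover real.

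Next I would compute $\mathcal{A}_G x^{m-1}$ componentwise. Its $i$-th component is
$$\sum_{i_{2}, i_{3}, \dots,  i_{m}=1}^{n} a_{ii_{2}i_{3}\dots i_{m}}\, x_{i_{2}}x_{i_{3}}\cdots x_{i_{m}}.$$
Because each factor $x_{i_j}$ equals $1/\sqrt{n}$, the product $x_{i_2}\cdots x_{i_m}$ equals $\left(\tfrac{1}{\sqrt{n}}\right)^{m-1}$ regardless of the summation indices, and it factors out of the sum. What remains is $\sum_{i_2,\dots,i_m} a_{i i_2\dots i_m}$, which by the degree formula recalled just before the definition of regularity equals $d(v_i)$, and by $k$-regularity equals $k$. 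Hence the $i$-th component of $\mathcal{A}_G x^{m-1}$ is $k\left(\tfrac{1}{\sqrt{n}}\right)^{m-1}$, independent of $i$.

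Finally I would match this against the right-hand side $\lambda x_i=\lambda/\sqrt{n}$ of the defining equation $\mathcal{A}_G x^{m-1}=\lambda x$. Solving
$$\frac{\lambda}{\sqrt{n}}=k\left(\frac{1}{\sqrt{n}}\right)^{m-1}$$
gives $\lambda=k\left(\tfrac{1}{\sqrt{n}}\right)^{m-2}$, which is precisely the claimed value, and since $x$ is real the pair $(\lambda,x)$ is a genuine $Z$-eigenpair.

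There is no substantial obstacle here: once the normalized constant vector is selected, the verification is a one-line contraction. The only point worth flagging — and the reason the exponent is $m-2$ rather than the $m-1$ that appears in the $H$-eigenvalue theorem — is that the $Z$-eigenvalue equation is \emph{linear} in $x$ on its right-hand side (it carries $\lambda x$, not $\lambda x^{[m-1]}$), so exactly one power of $1/\sqrt{n}$ is absorbed when transferring from the contraction $\mathcal{A}_G x^{m-1}$ to $\lambda x_i$.
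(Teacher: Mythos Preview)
Your proposal is correct and follows exactly the same approach as the paper: the paper's proof simply states that the normalized constant vector $x=(\tfrac{1}{\sqrt{n}},\dots,\tfrac{1}{\sqrt{n}})$ satisfies the $Z$-eigenvalue equations for $\lambda=k(\tfrac{1}{\sqrt{n}})^{m-2}$, and your write-up supplies precisely the verification details the paper omits.
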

  \begin{proof}
  The vector $x=(\frac{1}{\sqrt{n}}, \frac{1}{\sqrt{n}}, \dots,  \frac{1}{\sqrt{n}})\in \mathbb{R}^{n}$ satisfies the $Z$-eigenvalue equations for $\lambda =k(\frac{1}{\sqrt{n}})^{m-2}$.
  \end{proof}
 
\begin{thm}
  Let $G$ be a hypergraph with $n$ vertices and maximum degree $\Delta$.  Let $x=(x_{1},  x_{2},  \dots,  x_{n})$ be a $Z$-eigenvector of $\mathcal{A}_G=(a_{i_{1}i_{2}\dots i_{m}})$  corresponding to an eigenvalue $\mu$. If $x_{p}=max  \big\{|x_{1}|, |x_{2}|, \dots,  |x_{n}| \big\}$,  then $|\mu|\leq \frac{\Delta}{x_{p}}$.
 \end{thm}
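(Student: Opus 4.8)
The plan is to isolate the single scalar equation coming from the coordinate that attains the maximum modulus, and then to lean on the normalization $\sum_{i=1}^n x_i^2 = 1$ rather than on rescaling. This is the essential departure from Theorem~\ref{HEigUB}: there the eigenvector could be normalized so that $x_p = 1$, but for a $Z$-eigenpair the scale of $x$ is fixed by the constraint and cannot be adjusted, so $x_p$ must survive into the final bound. Concretely, let $p$ be an index realizing $x_p = \max\{|x_1|, \dots, |x_n|\}$. Writing out the $p$-th coordinate of the $Z$-eigenvalue equation $\mathcal{A}_G x^{m-1} = \mu x$ gives
$$\mu\, x_{p} = \sum_{i_{2}, i_{3}, \dots, i_{m}=1}^{n} a_{p i_{2} i_{3} \dots i_{m}}\, x_{i_{2}} x_{i_{3}} \dots x_{i_{m}}.$$

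Next I would pass to absolute values and apply the triangle inequality, then bound each factor $|x_{i_j}|$ from above. The decisive observation is that $\sum_{i=1}^n x_i^2 = 1$ forces $x_i^2 \le 1$, hence $|x_i| \le 1$ for every $i$, so that each product $|x_{i_2}|\,|x_{i_3}| \cdots |x_{i_m}| \le 1$. This collapses the sum to
$$|\mu|\, x_{p} \;=\; \left| \sum_{i_{2}, \dots, i_{m}=1}^{n} a_{p i_{2} \dots i_{m}}\, x_{i_{2}} \dots x_{i_{m}} \right| \;\le\; \sum_{i_{2}, \dots, i_{m}=1}^{n} |a_{p i_{2} \dots i_{m}}|.$$
To finish, I would identify the remaining sum with the degree of $v_p$: because every entry of $\mathcal{A}_G$ is nonnegative, the absolute values are redundant and the right-hand side equals $d(v_p) = \sum_{i_2,\dots,i_m} a_{p i_2 \dots i_m} \le \Delta$. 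Dividing through by $x_p > 0$ yields $|\mu| \le \Delta/x_p$.

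The only genuine subtlety — and the reason the bound takes this particular shape — lies in the choice of how to bound the factors $|x_{i_j}|$. One is tempted to use the sharper estimate $|x_{i_j}| \le x_p$, which would instead give $|\mu| \le \Delta\, x_p^{m-2}$; the stated form arises precisely from using only $|x_{i_j}| \le 1$ supplied by the normalization, keeping a single factor of $x_p$ in the denominator. The other ingredient that must be invoked explicitly is the nonnegativity of the adjacency entries $a_{p i_2 \dots i_m} = s/\alpha \ge 0$, which is what turns the sum of absolute values exactly into the degree $d(v_p)$ and connects the estimate to $\Delta$.
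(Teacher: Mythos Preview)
Your argument is correct and matches the paper's proof essentially line for line: both use the normalization $\sum_i x_i^2 = 1$ to deduce $|x_i|\le 1$, take absolute values in the $p$-th coordinate of $\mathcal{A}_G x^{m-1}=\mu x$, collapse the sum to $d(v_p)\le\Delta$, and divide by $x_p$. The only cosmetic difference is that the paper writes the inequality for a generic index $j$ before specializing to $p$, whereas you work at $p$ from the outset.
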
 
  \begin{proof}
   The $Z$-eigenvalue equations of $\mathcal{A}_G$ for $\mu$ and $x$ are $Ax^{m-1}=\mu x$,  and $\sum x_{i}^{2} =1$. Therefore,  $|x_{i}|\leq 1$,  for all $i=1, 2, 3, \dots,  n$. Now,  $$|\mu| |x_{j}|= \bigg|\sum_{i_{2}, i_{3}, \dots,  i_{m}=1}^{n} a_{ji_{2}i_{3}\dots i_{m}} x_{i_{2}}x_{i_{3}}\dots x_{i_{m}} \bigg|, $$ which implies  $|\mu| |x_{j}|\leq d(j)\leq \Delta,   \forall j=1, 2, 3, \dots, n$. Therefore,  $|\mu|\leq \frac{\Delta}{x_{p}}$.
  \end{proof}

\begin{definition}
A hypergraph $H=(V_{1}, E_{1})$ is said to be a spanning subhypergraph of a hypergraph $G=(V, E)$,  if $V=V_{1}$ and $E_{1}\subseteq E$.
\end{definition}

\begin{thm}
 Let $G=(V, E)$ be hypergraph. Let $H=(V^{'}, E^{'})$ be a   subhypergraph of $G$, such that, $ m.c.e(G)=m.c.e(H)$ be even. Then,  $\mu _{max}(H)\leq \mu _{max}(G)$,  where $\mu_{max} $ is the highest $Z$-eigenvalue of the corresponding adjacency  hypermatrix.
 \end{thm}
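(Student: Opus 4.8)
The plan is to reduce the statement to a variational (max) characterization of the largest $Z$-eigenvalue and then exploit the nonnegativity of the adjacency hypermatrix. Write $\mathcal{A}_G x^{m}:=\sum_{i_1,\dots,i_m} a_{i_1\dots i_m}x_{i_1}\cdots x_{i_m}=x^{\top}\!\left(\mathcal{A}_G x^{m-1}\right)$ for the associated homogeneous form of degree $m$. First I would record that for a supersymmetric tensor of \emph{even} order $m$ one has
\[
\mu_{max}(\mathcal{A}_G)=\max_{\|x\|_2=1}\mathcal{A}_G x^{m}.
\]
Indeed, the sphere $\{\|x\|_2=1\}$ is compact and the form is continuous, so the maximum is attained at some $x^\ast$; by Lagrange multipliers (and supersymmetry, which gives $\nabla(\mathcal{A}_G x^m)=m\,\mathcal{A}_G x^{m-1}$) the maximizer satisfies $\mathcal{A}_G (x^\ast)^{m-1}=\mu\, x^\ast$ with $\mu=\mathcal{A}_G (x^\ast)^m$, i.e.\ $(\mu,x^\ast)$ is a $Z$-eigenpair; conversely every $Z$-eigenpair $(\lambda,y)$ satisfies $\lambda=\mathcal{A}_G y^m\le\max_{\|x\|_2=1}\mathcal{A}_G x^m$. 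This is where the even-order hypothesis enters, as it is the standard setting in which the extreme value of the form is guaranteed to be the largest $Z$-eigenvalue.

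Next I would put both hypermatrices on a common footing. Because $m.c.e(H)=m.c.e(G)=m$, the two adjacency hypermatrices have the same order $m$, and the normalizing constant $\alpha$ in the definition of $\mathcal{A}$ depends only on $m$ and the edge cardinality $s$; hence for every edge $e\in E'$ the entries it contributes to $\mathcal{A}_H$ coincide with those it contributes to $\mathcal{A}_G$. After padding $H$ with the isolated vertices of $V\setminus V'$ (which only adds zero slices and, by the characterization above together with $\mu_{max}(H)\ge 0$, does not change $\mu_{max}$, since those coordinates can be set to $0$ in the maximization), I may assume $H$ and $G$ share the vertex set $V$ and write
\[
\mathcal{A}_G=\mathcal{A}_H+\mathcal{B},
\]
where $\mathcal{B}$ is the entrywise nonnegative adjacency hypermatrix built from the edges in $E\setminus E'$.

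Finally I would run the comparison. Since all entries of $\mathcal{A}_H$ are nonnegative and $x_{i_1}\cdots x_{i_m}\le|x_{i_1}|\cdots|x_{i_m}|$, we have $\mathcal{A}_H x^m\le \mathcal{A}_H|x|^m$ with $\||x|\|_2=\|x\|_2$, so the maximum defining $\mu_{max}(H)$ is attained at some nonnegative unit vector $x^\ast\ge 0$. Then
\[
\mu_{max}(G)=\max_{\|x\|_2=1}\mathcal{A}_G x^m\ge \mathcal{A}_G (x^\ast)^m=\mathcal{A}_H (x^\ast)^m+\mathcal{B}(x^\ast)^m=\mu_{max}(H)+\mathcal{B}(x^\ast)^m\ge\mu_{max}(H),
\]
because $\mathcal{B}(x^\ast)^m\ge 0$ (a nonnegative tensor evaluated at a nonnegative vector). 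This yields the claim.

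The main obstacle I expect is the first step: cleanly justifying $\mu_{max}=\max_{\|x\|_2=1}\mathcal{A}_G x^m$ for even order, i.e.\ that the constrained maximizer is genuinely a $Z$-eigenvector and that its value is the \emph{largest} $Z$-eigenvalue. The remaining bookkeeping---checking that $\mathcal{A}_G-\mathcal{A}_H$ really is a nonnegative tensor (which hinges on $m.c.e(G)=m.c.e(H)$ so that the constants $\alpha$ match) and that padding by isolated vertices leaves $\mu_{max}$ unchanged---is routine but should be stated carefully, since a mismatch in $m$ would break the additive decomposition.
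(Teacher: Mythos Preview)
Your proposal is correct and follows essentially the same route as the paper: both use the variational characterization $\mu_{max}=\max_{\|x\|=1}\mathcal{A}x^{m}$ for even order, pad $H$ to the common vertex set, and then compare using nonnegativity of the maximizer and of $\mathcal{A}_G-\mathcal{A}_H$. The only cosmetic differences are that the paper cites \cite{Li2013} for the variational formula and invokes Perron--Frobenius for the nonnegativity of $x^\ast$, whereas you sketch Lagrange multipliers and use the $|x|$-trick; your version is in fact more self-contained and makes explicit the role of $m.c.e(G)=m.c.e(H)$ in matching the constants~$\alpha$.
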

 \begin{proof}
 Let $|V|=n$, $|V^{'}|=n^{'}$ ($\leq n$) and $m.c.e(G)=m.c.e(H)=m.$ 
  Now, 
  \begin{align*}  
  \mu _{max}(H)& = max _{||x||=1} x^{t} \mathcal{A}_{H} x^{m-1} \text{ (by using lemma (3.1) in \cite{Li2013})}\\
   & =    max _{||x||=1} \bigg(\sum_{i_{1},  i_{2},  \dots,  i_{m}=1}^{n^{'}}   a^H_{i_{1}i_{2}...i_{m}} x_{i_{1}} x_{i_{2}}\dots x_{i_{m}}\bigg)\\    
   & =    max _{||x||=1} \bigg(\sum_{i_{1},  i_{2},  \dots,  i_{m}=1}^{n}   a^H_{i_{1}i_{2}...i_{m}} x_{i_{1}} x_{i_{2}}\dots x_{i_{m}}\bigg), \text{ where } a^H_{i_{1}..i_{m}}=x_{i_r}=0 \text{ when } i_r >n^{'}\\ 
   &\leq  \bigg(\sum_{i_{1}, i_{2}, \dots,  i_{m} =1}^{n}  a^{G}_{i_{1}i_{2}\dots i_{m}} x_{i_{1}} x_{i_{2}}\dots x_{i_{m}}\bigg)  \\
  &\leq\mu _{max}(G), 
  \end{align*}
  since  each component of $x$ is nonnegative (by Perron-Frobenious theorem \cite{Chang2008}) and the number of edges of $G$ is greater than or equal to the number of edges of $H$. Hence the proof.
 \end{proof}

\begin{definition}
 Let $G=(V, E)$ be a hypergraph  with $V=\{v_{1},  v_{2},  \dots,  v_{n}\}$,  $E=\{e_{1},  e_{2},  \dots,  e_{k}\}$,  and $m.c.e(G)=m$. Let $x=(x_1, x_2, \dots,  x_n)$ be a vector in $\mathbb{R}^n$ and $p\geq s-1$ be an integer. For an edge $e=\{ v_{l_{1}},  v_{l_{2}},  \dots,  v_{l_{s}}\}$  and a vertex $v_{l_i}$,  we define  $$x^{e/v_{l_i}}_{p} := \sum x_{r_1}x_{r_2}\dots x_{r_p}, $$
where the sum is over $r_{1},  r_{2}, \dots,  r_{p}$ chosen in all possible way  from $\{l_{1},  l_{2},  \dots,  l_{s}\}$,  such that,  all $l_j (j \ne i)$ occur at least once.
Whereas, $$x^{e}_{p} := \sum x_{r_1}x_{r_2}\dots x_{r_p}, $$
where the sum is over $r_{1},  r_{2}, \dots,  r_{p}$  chosen in all possible way  from $\{l_{1},  l_{2},  \dots,  l_{s}\}$ with at least once for each element of the set.

The symmetric (adjacency) hypermatrix $\mathcal{A}_G$ of order $m$ and dimension $n$ uniquely defines a homogeneous polynomial of degree $m$ and in $n$ variables by
  $$F_{\mathcal{A}_G}(x)=\sum_{i_1, i_2, \dots, i_m =1}^{n} a_{i_1 i_2\dots i_m} x_{i_1}x_{i_2}\dots x_{i_m}.$$
\end{definition}
We rewrite the above polynomial as: $$F_{\mathcal{A}_G}(x)= \sum_{e \in E} a_{e}^{G} x^{e}_{m}, $$ where $a_{e}^{G}=\frac{s}{\alpha}$,    $\alpha =\sum_{k_{1}, k_{2}, \dots,  k_{s} \geq 1,  \sum k_{i}=m}  \frac{m!}{k_{1}! k_{2}!\dots k_{s}!}$,  and $s$ is the cardinality of the edge $e$.

\begin{definition}
 Let $G$ and $H$ be two hypergraphs. The Cartesian product,  $G \times H$,  of $G$ and $H$ is defined by the vertex set $V(G \times H)=V(G) \times V(H)$ and the edge set  $E(G \times H)= \big\{ \{v\} \times e : v \in V(G),  e \in E(H)\big\}
 \bigcup \big\{ e \times \{v\} : e \in E(G),  v \in V(H)\big\}.$
\end{definition}

\begin{definition}
Let $G$ be a hypergraph  with the vertex set $V=\{v_{1},  v_{2},  \dots,  v_{n}\}$ and $m.c.e(G)=m$. For an edge 
$e= \{ v_{l_1},  v_{l_2},  \dots,  v_{l_s} \}$ and an integer $r \geq m$, the arrangement  $(v_{p_1}  v_{ p_2} \dots v_{p_r})$ (where $p_1,  p_2,  \dots,  p_r$ are  chosen in all possible way  from $\{ l_{1},  l_{2},  \dots,  l_{s} \}$ with at least once   for each element of the set) represents the edge $e$ in order $r$.
\end{definition}
 
\begin{exm}
Let $G=(V, E)$ where $V= \{1, 2, 3, 4, 5\}$ and $E= \big\{ \{1,  2,  3 \},  \{ 2,  3,  5 \},  \{ 1,  3,  4,  5\}  \big\}$,  then the arrangement $(12233)$ represents the edge $\{1,  2,  3 \}$ in order 5.   $(12123)$ is also a representation of the edge
$\{1,  2,  3 \}$ in order five,  whereas,  $(111123)$ represents  the edge $\{1,  2,  3 \}$ in 6 order.
\end{exm}

Let $G=(V, E)$ be a hypergraph with $m.c.e(G) =m$ and $E_i= \{ e\in E : v_i \in   e\}$. Now,  the $H$-eigenvalue equation for $\mathcal{A}_G$ becomes
 $$\sum _{e\in E_i } a_{e}^{G} x^{e/v_i}_{m-1} =\lambda x_{i}^{(m-1)},  \text{  for all  } i.$$

\begin{thm}
 Let $G$ and $H$ be two hypergraphs with $m.c.e(G) = m.c.e(H)$. If $\lambda$ and $\mu$  are  $H$-eigenvalue for $G$ and $H$,  respectively,  then $\lambda +\mu$ is an  $H$-eigenvalue for $G \times H$.
 \end{thm}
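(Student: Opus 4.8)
The plan is to produce an explicit $H$-eigenvector of $\mathcal{A}_{G \times H}$ from the two given eigenvectors, mimicking the Kronecker-product construction from ordinary spectral graph theory. Write $V(G) = \{v_1, \dots, v_n\}$ and $V(H) = \{w_1, \dots, w_l\}$, let $x = (x_1, \dots, x_n)$ be an $H$-eigenvector of $\mathcal{A}_G$ for $\lambda$ and $y = (y_1, \dots, y_l)$ an $H$-eigenvector of $\mathcal{A}_H$ for $\mu$, and put $m = m.c.e(G) = m.c.e(H)$. I would define the candidate vector $z$ on $V(G \times H) = V(G) \times V(H)$ by $z_{(i,j)} = x_i y_j$, i.e.\ $z = x \otimes y$, and claim that $(\lambda + \mu,\, z)$ is an $H$-eigenpair of $\mathcal{A}_{G \times H}$. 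Note first that $m.c.e(G \times H) = m$ as well, since every edge of $G \times H$ is a ``copy'' $\{v\} \times f$ or $e \times \{w\}$ of an edge of $H$ or of $G$ and hence has cardinality at most $m$; this is what forces the three adjacency hypermatrices to have the same order, and it is exactly where the hypothesis $m.c.e(G) = m.c.e(H)$ enters.

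Next I would unwind the $H$-eigenvalue equation at a fixed vertex $(v_i, w_j)$. The edges of $G \times H$ incident to $(v_i, w_j)$ split cleanly into two families: the ``$H$-type'' edges $\{v_i\} \times f$ with $f$ an edge of $H$ containing $w_j$, and the ``$G$-type'' edges $e \times \{w_j\}$ with $e \in E_i$ an edge of $G$ containing $v_i$. The crucial bookkeeping observation is that the coefficient $a_e^G = s/\alpha$ of the adjacency hypermatrix depends only on the cardinality $s = |e|$ and on the common order $m$; therefore an $H$-type edge $\{v_i\} \times f$ carries in $\mathcal{A}_{G \times H}$ exactly the coefficient $a_f^H$ it had in $\mathcal{A}_H$, and symmetrically for the $G$-type edges.

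The heart of the argument is a factorization of the monomial sums $x^{e/v_{l_i}}_{m-1}$. For an $H$-type edge $\tilde{e} = \{v_i\} \times f$ the vertices are $(v_i, w_{j_1}), \dots, (v_i, w_{j_s})$, and since $z_{(i, j_t)} = x_i y_{j_t}$ has the constant first factor $x_i$, every length-$(m-1)$ arrangement counted by $z^{\tilde{e}/(v_i, w_j)}_{m-1}$ contributes $x_i^{m-1}$ times the corresponding $y$-monomial; summing gives $z^{\tilde{e}/(v_i, w_j)}_{m-1} = x_i^{m-1}\, y^{f/w_j}_{m-1}$. Consequently the $H$-type part of the left-hand side equals $x_i^{m-1} \sum_{f} a_f^H\, y^{f/w_j}_{m-1} = x_i^{m-1} \cdot \mu\, y_j^{m-1} = \mu\, z_{(i,j)}^{m-1}$ by the $H$-eigenvalue equation for $H$. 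By the mirror-image computation the $G$-type part equals $\lambda\, z_{(i,j)}^{m-1}$, so the two families add to $(\lambda + \mu)\, z_{(i,j)}^{m-1}$, which is precisely the eigenvalue equation at $(v_i, w_j)$. Since $i, j$ were arbitrary and $z$ is real and nonzero (choose $i, j$ with $x_i, y_j \neq 0$), $(\lambda + \mu,\, z)$ is a genuine $H$-eigenpair.

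I expect the main obstacle to be the combinatorial factorization step: one must verify carefully that the ``each vertex occurs at least once'' constraint defining $z^{\tilde{e}/(v_i, w_j)}_{m-1}$ matches the constraint defining $y^{f/w_j}_{m-1}$ once the constant factor $x_i$ is extracted, and the index bookkeeping over arrangements of length $m-1$ is where a slip is easiest. A secondary point needing care is the coefficient identity $a^{G \times H}_{\{v_i\}\times f} = a_f^H$; this is routine once one observes that $\alpha$ is a function of $(s, m)$ alone, but it genuinely relies on all three hypergraphs sharing the same $m$, so I would state it explicitly rather than treat it as obvious.
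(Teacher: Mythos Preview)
Your proposal is correct and follows essentially the same route as the paper: define the tensor-product vector $z_{(i,j)}=x_iy_j$, split the edges through $(v_i,w_j)$ into the $G$-type and $H$-type families, factor $x_i^{m-1}$ (resp.\ $y_j^{m-1}$) out of the corresponding sum, and invoke the two eigenvalue equations. Your write-up is in fact somewhat more explicit than the paper's about why $m.c.e(G\times H)=m$, why the edge coefficients $s/\alpha$ transfer unchanged, and why $z\neq 0$.
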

 
 \begin{proof}
 Let $n_1$ and $n_2$ be the number of vertices in $G$ and $ H$,  respectively,  and $m.c.e(G) = m.c.e(H)= m$.
  Let $(\lambda, \textbf{u})$ and $(\mu, \textbf{v})$ be $H$-eigenpairs of $\mathcal{A}_G$ and $\mathcal{A}_H$,  respectively.
Let $\textbf{w}\in\mathbb{C}^{n_{1}n_{2}}$  be a vector with the entries indexed by the pairs $(a, b)\in[n_1]\times[n_2]$,  such that,  $w(a, b)=u(a)v(b)$. Now,  we show that $(\lambda+\mu,   \textbf{w})$ is an $H$-eigenpair of $\mathcal{A}_{G\times H}$.
\begin{align*}
 \sum_{e\in E_{(a, b)}}a_{e}^{G\times H}w^{e/{(a, b)}}_{m-1}
  &=\sum_{\substack{\{a\}\times e \in E_{(a, b)}\\ \mbox{ with } e\in E_b}}a_{e}^{G\times H} w^{\{a\}\times e/(a, b)}_{m-1}
   +\sum_{\substack {e \times \{b\} \in E_{(a, b)}\\ \mbox{ with } e \in E_a}} a_{e}^{G\times H}w^{e \times \{b\}/(a, b)}_{m-1}\\
  &=\sum_{e \in H_b}a_{e}^{G\times H} u^{m-1}(a)v^{e/b}_{m-1}+\sum_{e \in G_a}a_{e}^{G\times H}u^{e/a}_{m-1}v^{m-1}(b)\\
   &=u^{m-1}(a) \sum_{e \in H_b}a_{e}^{ H} v_{m-1}^{e/b}+ v^{m-1}(b) \sum_{e \in E_a}a_{e}^{G}u^{e/a}_{m-1}\\
   &=u^{m-1}(a) \mu v^{m-1}(b)+v^{m-1}(b) \lambda u^{m-1}(a)\\
  &=(\lambda+\mu)w^{m-1}(a, b).
\end{align*}
Hence the proof\footnote{For similar proof on uniform hypergraph see \cite{Cooper2012}.}.
\end{proof}
  
\begin{lemma}
\label{largest z eigenvalue}
 Let $A$ and $B$ be two symmetric hypermatrices of order $m$ and dimension $n$,  where $m$ is even. Then $\lambda_{max}(A+B)\leq\lambda_{max}(A)+\lambda_{max}(B)$,   where $\lambda_{max}(A)$ denotes the largest $Z$-eigenvalue of $A$.
 \end{lemma}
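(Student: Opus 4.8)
The plan is to use the variational (max) characterization of the largest $Z$-eigenvalue for an even-order symmetric hypermatrix, which is exactly the tool invoked in the preceding theorem via lemma (3.1) in \cite{Li2013}. Since $m$ is even, for any symmetric hypermatrix $M$ of order $m$ and dimension $n$ one has
\begin{equation*}
\lambda_{max}(M)=\max_{\|x\|=1} M x^{m}=\max_{\|x\|=1}\sum_{i_1,\dots,i_m=1}^{n} M_{i_1\dots i_m}\, x_{i_1}x_{i_2}\dots x_{i_m},
\end{equation*}
where $\|x\|$ denotes the Euclidean norm and $Mx^m=x^{t}Mx^{m-1}$. The evenness of $m$ is what guarantees this clean max-formulation (the homogeneous form is genuinely maximized rather than merely supremized over the sphere), so I would state this identity first and attribute it to \cite{Li2013}.

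Next I would exploit linearity of the form $x\mapsto (A+B)x^m$ in the hypermatrix argument: since entrywise $(A+B)_{i_1\dots i_m}=A_{i_1\dots i_m}+B_{i_1\dots i_m}$, we have $(A+B)x^m=Ax^m+Bx^m$ for every $x$. Therefore
\begin{equation*}
\lambda_{max}(A+B)=\max_{\|x\|=1}\bigl(Ax^m+Bx^m\bigr).
\end{equation*}
The main step is then the elementary subadditivity of the maximum: for a fixed unit vector $x^{\ast}$ attaining the maximum of $(A+B)x^m$, we bound $Ax^{\ast m}\le \max_{\|x\|=1}Ax^m=\lambda_{max}(A)$ and likewise $Bx^{\ast m}\le \lambda_{max}(B)$, so their sum is at most $\lambda_{max}(A)+\lambda_{max}(B)$. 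This gives the claimed inequality directly.

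I do not expect a genuine obstacle here, since the argument is the standard proof that the maximum of a sum is at most the sum of the maxima, transported to the hypermatrix setting through the Rayleigh-type characterization. The only point requiring care — and the one I would flag explicitly — is the legitimacy of the max-characterization itself, which relies crucially on $m$ being even (so that the objective $Mx^m$ is well defined as a maximum over the unit sphere and the largest $Z$-eigenvalue coincides with that maximum). I would also note that the maximizer $x^{\ast}$ for $A+B$ need not maximize $A$ or $B$ separately, which is precisely why the inequality can be strict and why we only obtain an upper bound rather than equality.
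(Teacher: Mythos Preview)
Your proposal is correct and matches the paper's own proof essentially line for line: the paper invokes the variational characterization $\lambda_{max}(M)=\max_{\|x\|=1} x^{t}Mx^{m-1}$ from lemma~(3.1) in \cite{Li2013}, then applies linearity and the subadditivity of the maximum to conclude. Your additional remarks on why evenness of $m$ is needed and why the inequality may be strict are helpful elaborations but not required for the argument.
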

 \begin{proof}
 \begin{align*}
 \lambda_{max}(A+B) & = max _{||x||=1} x^{t}(A+B)x^{m-1} \text{ (by using lemma (3.1) in \cite{Li2013})} \\ 
  & \leq max _{||x||=1} x^{t}Ax^{m-1}+max _{||x||=1} x^{t}Bx^{m-1}\\
 &= \lambda_{max}(A)+\lambda_{max}(B).
  \end{align*}
 \end{proof}

Let $G=(V, E)$ be a hypergraph with the vertex set $V=\{v_{1}, v_{2}, \dots,  v_{n}\}$ and $m=max\{|e_{i}|    : e_{i}\in E\}$. We partition the edge set $E$ as,  $E=E_{1} \cup E_{2} \cup \dots \cup E_{m}$,  where $E_{i}$ contains all the edges of the cardinality $i$ and construct a hypergraph $G_{i}=(V, E_{i})$,  for a nonempty $E_i$.

\begin{definition}
Define the adjacency hypermatrix of $G_{i}$ in $m$ $(> i)$ -order by an $n$  dimensional $m$ order  hypermatrix 
$$\mathcal{A}_{G_{i}}^m = \big(       (a_{G_{i}}^m)_{ p_{1}p_{2}\dots p_{m}} \big),  \text {   } 1 \leq p_{1},  p_{2},  \dots,  p_{m} \leq n, $$ such that,  for  any  $e=\{ v_{l_{1}},  v_{l_{2}},  \dots,  v_{l_{i}} \} \in E_i$, 
$$(a_{G_{i}}^m)_{ p_{1}p_{2}\dots p_{m}}=\frac{i}{\alpha},  \text{ where } 
\alpha =\sum_{k_{1}, k_{2}, \dots, k_{i} \geq 1,  \sum k_{j}=m}  \frac{m!}{k_{1}! k_{2}!\dots k_{i}!}$$   
and   $p_{1}, p_{2}, \dots,  p_{m}$ are chosen in all possible way  from $\{l_{1}, l_{2}, \dots, l_{i}\}$ with at least once
  for each element of the set. The other positions of  $\mathcal{A}_{G_{i}}^m$    are zero. 
  \end{definition}
  Thus,  we can represent a hypergraph $G$,  with $m.c.e(G)=s$,  in higher order $m > s$ by the hypermatrix $\mathcal{A}_{G}^m$. Clearly,  all the eigenvalue equations show that the eigenvalues of $\mathcal{A}_{G}^{m_1}$ and $\mathcal{A}_{G}^{m_2}$ are not equal for $m_1 \ne m_2$.

\begin{thm}\label{EdgePartitionTheo}
Let $G=(V, E)$ be a hypergraph and $m.c.e(G)=m$ be even. Then
$\lambda _{max}(\mathcal{A}_{G})\leq \sum _{i=1}^m\lambda _{max}(\mathcal{A}_{G_i}^m)$,  where $\lambda_{max}({A})$ is the largest $Z$-eigenvalue of $A$.
 \end{thm}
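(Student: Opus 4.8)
The plan is to reduce the statement about $\lambda_{max}(\mathcal{A}_G)$ to the additivity lemma for $Z$-eigenvalues (Lemma~\ref{largest z eigenvalue}) by expressing $\mathcal{A}_G$ as a sum of the order-$m$ hypermatrices $\mathcal{A}_{G_i}^m$. First I would observe that the edge partition $E=E_1\cup\dots\cup E_m$ induces a corresponding decomposition of the adjacency hypermatrix. Since each edge of cardinality $i$ contributes nonzero entries at positions determined solely by that edge, and distinct edges (even of the same cardinality) occupy disjoint sets of index positions, the entrywise identity
$$\mathcal{A}_G=\sum_{i=1}^m \mathcal{A}_{G_i}^m$$
should hold. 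The key point is that the coefficient $\tfrac{i}{\alpha}$ assigned to an edge $e$ of cardinality $i$ in the construction of $\mathcal{A}_G$ is exactly the same coefficient assigned to $e$ when it is viewed as an edge of $G_i$ and represented in order $m$ via $\mathcal{A}_{G_i}^m$ — because both use the identical formula with $s=i$ and the same $m$. I would verify this entrywise: at any index tuple $(p_1,\dots,p_m)$ corresponding to some edge $e\in E_i$, both sides equal $\tfrac{i}{\alpha}$, and all other positions are zero on both sides.

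Once the decomposition is established, the theorem follows by iterating Lemma~\ref{largest z eigenvalue}. Each $\mathcal{A}_{G_i}^m$ is a symmetric hypermatrix of order $m$ and dimension $n$, and $m$ is even by hypothesis, so the lemma applies. A straightforward induction on the number of nonempty parts gives
$$\lambda_{max}(\mathcal{A}_G)=\lambda_{max}\!\left(\sum_{i=1}^m \mathcal{A}_{G_i}^m\right)\leq \sum_{i=1}^m \lambda_{max}(\mathcal{A}_{G_i}^m),$$
which is the desired inequality. The symmetry of each summand is needed to invoke the variational characterization $\lambda_{max}(A)=\max_{\|x\|=1}x^t A x^{m-1}$ from \cite{Li2013} that underlies Lemma~\ref{largest z eigenvalue}.

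The main obstacle I anticipate is verifying the entrywise decomposition rigorously, in particular confirming that the normalizing constant $\alpha$ appearing in $\mathcal{A}_G$ for an edge of size $i$ genuinely coincides with the $\alpha$ used in the definition of $\mathcal{A}_{G_i}^m$. Both are defined by $\alpha=\sum_{k_1,\dots,k_i\geq 1,\ \sum k_j=m}\frac{m!}{k_1!\cdots k_i!}$ with the same $m$, so they do match, but one must be careful that the set of index positions a given edge occupies in $\mathcal{A}_G$ (namely all arrangements $(p_1,\dots,p_m)$ drawn from the edge's vertices hitting each at least once) is identical to the set it occupies in $\mathcal{A}_{G_i}^m$. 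Since both constructions use the identical ``at least once for each element'' rule, these position sets agree, and the only subtlety is ensuring no index tuple is shared between two different edges — which holds because a tuple determines its underlying vertex set, hence at most one edge. With disjointness of supports across distinct edges secured, the sum is unambiguous and the decomposition is exact.
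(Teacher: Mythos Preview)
Your proposal is correct and follows exactly the paper's approach: the paper's proof consists of the single observation that $\mathcal{A}_G=\sum_{i=1}^m\mathcal{A}_{G_i}^m$ followed by an appeal to Lemma~\ref{largest z eigenvalue}. Your additional care in verifying the entrywise decomposition and the disjointness of supports is welcome detail that the paper leaves implicit.
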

\begin{proof}
Since $\mathcal{A}_{G}=\sum _{i=1}^{m} \mathcal{A}_{G_i}^m$,  the proof follows from the lemma ($\ref{largest z eigenvalue}$).
\end{proof}
Moreover,  the theorem (\ref{EdgePartitionTheo}) implies $\lambda _{max}(\mathcal{A}_{G})\leq \sum _{i=1}^m n_i  \lambda _{max}(\mathcal{A}_{i}^m)$,   where $n_{i}$ is the number of edges of cardinality $i$ and $\mathcal{A}_{i}^m$ is the adjacency hypermatrix in $m$-order of a hypergraph contains a single edge of cardinality $i$. 

\subsection{Laplacian hypermatrix and eigenvalues}

\begin{definition}
  Let $G=(V, E)$ be a (general) hypergraph without any isolated vertex where $V= \{ v_{1},  v_{2},  \dots,  v_{n} \}$ and $E = \{ e_{1},  e_{2},  \dots,  e_{k} \}$. Let $m.c.e(G)=m$. We define the Laplacian hypermatrix, $L_{G}$, of $G=(V, E)$ as $L_{G}=D_{G}-\mathcal{A}_{G}=(l_{i_{1}i_{2}\dots i_{m}}),  1\leq i_{1}, i_{2}, \dots, i_{m} \leq n,$
where $D_{G}=(d_{i_{1}i_{2}\dots i_{m}})$ is the $m$ order $n$ dimensional diagonal hypermatrix with $d_{ii\dots i}=d(v_i)$ and others  are zero.
The signless Laplacian of $G$ is defined as $L_{G}=D_G+\mathcal{A}_{G}.$
  \end{definition}

Let $G=(V, E)$ be a hypergraph with $m.c.e(G)=m$. For any edge $e=\{ v_{l_{1}},  v_{l_{2}},  \dots,  v_{l_{s}}\}$,  we define a homogeneous polynomial of degree $m$ and in $n$ variables by

$$
L(e)x^m = \sum_{j=1}^s x_{i_j}^m - \frac{s}{\alpha}x_m^e \text{  } (s\leq m).
$$
\begin{proposition}\label{inequality}
 $ \sum_{j=1}^s x_{i_j}^m \geq \frac{s}{\alpha}x_m^e$  $(x_{i_j}\in \mathbb{R_{+}})$.
\end{proposition}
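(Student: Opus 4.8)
The plan is to prove this pointwise inequality (for every nonnegative assignment of the variables) by a termwise application of the weighted arithmetic--geometric mean inequality, after first rewriting $x^e_m$ as a multinomial sum. Writing $e=\{v_{l_1},\dots,v_{l_s}\}$ and grouping the defining sum of $x^e_m$ according to how many times each vertex $l_j$ is used, an arrangement in which $l_j$ occurs exactly $k_j$ times (with $k_j\geq 1$ and $\sum_j k_j=m$) contributes the monomial $x_{l_1}^{k_1}\cdots x_{l_s}^{k_s}$, and there are $\frac{m!}{k_1!\cdots k_s!}$ such arrangements. Hence
$$x^e_m=\sum_{\substack{k_1,\dots,k_s\geq 1\\ \sum_j k_j=m}}\frac{m!}{k_1!\cdots k_s!}\,x_{l_1}^{k_1}\cdots x_{l_s}^{k_s},$$
and, crucially, setting all $x_{l_j}=1$ recovers $\alpha$ as exactly this same multinomial sum of coefficients. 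This observation is what ties $\alpha$ to the combinatorics and makes the constant $s/\alpha$ natural.

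The key step is to bound each monomial. Since all $x_{l_j}\geq 0$, for any composition $(k_1,\dots,k_s)$ with $\sum_j k_j=m$ the weighted AM--GM inequality gives
$$x_{l_1}^{k_1}\cdots x_{l_s}^{k_s}=\prod_{j=1}^s\bigl(x_{l_j}^m\bigr)^{k_j/m}\leq \sum_{j=1}^s\frac{k_j}{m}\,x_{l_j}^m.$$
Substituting this into the multinomial expansion of $x^e_m$ turns the problem into a purely combinatorial bookkeeping of coefficients.

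The crux is then to show that, after this substitution, the coefficient of each $x_{l_j}^m$ collapses to exactly $\alpha/s$. Collecting terms,
$$x^e_m\leq \frac{1}{m}\sum_{j=1}^s\Bigl(\sum_{\substack{k_1,\dots,k_s\geq1\\ \sum_i k_i=m}}\frac{m!}{k_1!\cdots k_s!}\,k_j\Bigr)x_{l_j}^m,$$
and by the symmetry of the index set $\{(k_1,\dots,k_s):k_i\geq1,\ \sum_i k_i=m\}$ under permuting coordinates, the inner sum is independent of $j$. Summing that inner sum over $j$ and using $\sum_j k_j=m$ shows its total equals $m\alpha$, so each inner sum equals $m\alpha/s$ and the coefficient of every $x_{l_j}^m$ is $\alpha/s$. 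This yields $x^e_m\leq \frac{\alpha}{s}\sum_{j=1}^s x_{l_j}^m$, which is the claim after multiplying by $s/\alpha$.

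I expect the only genuine subtlety to be the coefficient computation in the last step: one must verify carefully that the symmetry argument applies (the constraint $k_i\geq 1$ together with $\sum_i k_i=m$ is permutation-invariant in the $k_i$) and that the identity $\sum_j k_j=m$ is what converts $\sum_{j}\sum_{\mathrm{comp}}\frac{m!}{\prod_i k_i!}k_j$ into $m\alpha$. Everything else---the multinomial rewriting and the AM--GM bound---is routine, and nonnegativity of the $x_{l_j}$ (the hypothesis $x_{i_j}\in\mathbb{R}_+$) is exactly what licenses the AM--GM step.
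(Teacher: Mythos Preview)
Your proof is correct and follows essentially the same approach as the paper: both rewrite $x^e_m$ as a multinomial sum, apply the weighted AM--GM inequality termwise to obtain $x_{l_1}^{k_1}\cdots x_{l_s}^{k_s}\leq \frac{1}{m}\sum_j k_j x_{l_j}^m$, and then sum over all compositions. Your version is in fact more careful than the paper's, which asserts the final inequality $\frac{\alpha}{s}\sum_j x_{i_j}^m\geq x^e_m$ directly after the summation without spelling out the symmetry argument that pins down the coefficient as $\alpha/s$.
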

\begin{proof}
 $x_m^e$ is the sum of all possible terms,  $x_{i_1}^{k_1} x_{i_2}^{k_2}\dots x_{i_s}^{k_s}$ (where $\sum k_i = m$ and $k_i \geq 1$) $$\text{ where } \alpha =\sum_{k_{1}, k_{2}, \dots, k_{s} \geq 1,  \sum k_{i}=m}  \frac{m!}{k_{1}! k_{2}!\dots k_{s}!}, $$with some natural coefficient. Now,  by applying AM-GM inequality on $k_1 x_{i_1}^{m},  k_2 x_{i_2}^{m}, \dots,  k_s x_{i_s}^{m}$ we get
 \begin{equation}\label{lemma:equation}
  \frac{1}{m} \sum_{j=1}^s k_j x_{i_j}^m \geq x_{i_1}^{k_1} x_{i_2}^{k_2}\dots x_{i_s}^{k_s}.
 \end{equation}
If we apply (\ref{lemma:equation}) for each term of $x_m^e$ and take the sum,  we get
$$
 \frac{\alpha}{s} \sum_{j=1}^s x_{i_j}^m \geq x_m^e.
$$
\end{proof}
Many properties of Laplacian and signless Laplacian tensors are discussed in \cite{Qi2014}. Now we show that some of the results in general graph are also true for non-uniform (general) hypergraph.
Note that, here, $L$ is co-positive tensor since, $Lx^m=\sum _{e\in E}L(e)x^m \geq 0$ for all $x \in \mathbb{R}_{+}^n $.
\begin{thm}\label{positivity}
 Let $G=(V, E)$ be a general hypergraph. Let $L=(l_{i_{1}i_{2}\dots i_{m}})  \text{ where } 1\leq i_{1}, i_{2}, \dots, i_{m} \leq n,$ be the Laplacian hypermatrix of $G$. Then $0\leq \lambda \leq 2\Delta,$ where $\lambda$ is an $H$-eigenvalue of $L$.
\end{thm}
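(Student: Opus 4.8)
The plan is to estimate $\lambda$ by reading off the eigenvalue equation at the coordinate of largest modulus; this is essentially the tensor Ger\v{s}gorin estimate quoted earlier, specialized to $L=D_G-\mathcal{A}_G$. First I would use that $(\lambda,x)$ is an $H$-eigenpair, so $x=(x_1,\dots,x_n)$ is real, and I would pick an index $p$ with $|x_p|=\max_i|x_i|$. Since $x\neq 0$ we have $|x_p|>0$, so $x_p^{m-1}\neq 0$.

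Next I would write out the $p$-th component of $L x^{m-1}=\lambda x^{[m-1]}$. Because $D_G$ is diagonal with $d_{ii\dots i}=d(v_i)$, the only surviving diagonal term is $d(v_p)x_p^{m-1}$, giving
\[
\big(\lambda-d(v_p)\big)\,x_p^{m-1}=-\sum_{i_2,\dots,i_m=1}^{n} a_{p i_2\dots i_m}\,x_{i_2}\cdots x_{i_m}.
\]
The key estimate is then to bound the right-hand side. Since every entry of $\mathcal{A}_G$ is nonnegative and $|x_{i_j}|\le|x_p|$ for each $j$,
\[
\Big|\sum_{i_2,\dots,i_m=1}^{n} a_{p i_2\dots i_m}\,x_{i_2}\cdots x_{i_m}\Big|\le\sum_{i_2,\dots,i_m=1}^{n} a_{p i_2\dots i_m}\,|x_p|^{m-1}=d(v_p)\,|x_p|^{m-1},
\]
using the identity $d(v_p)=\sum_{i_2,\dots,i_m} a_{p i_2\dots i_m}$ recorded in the excerpt. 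Taking absolute values in the displayed equation and dividing by $|x_p|^{m-1}>0$ yields $|\lambda-d(v_p)|\le d(v_p)$, hence $0\le\lambda\le 2d(v_p)\le 2\Delta$, which is the claim. (Equivalently: the real number $\lambda$ lies in the Ger\v{s}gorin disc of center $d(v_p)$ and radius $d(v_p)$, whose intersection with $\mathbb{R}$ is $[0,2d(v_p)]$.)

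The main obstacle — and the point where one should resist a shorter but incomplete argument — is the lower bound $\lambda\ge 0$. It is tempting to deduce it from co-positivity: since $L x^{m}=\lambda\sum_i x_i^{m}$ and $Lx^m\ge 0$, one would like $\lambda\ge 0$. But co-positivity has only been established for $x\in\mathbb{R}_+^n$, whereas an $H$-eigenvector may have negative entries, and then $\sum_i x_i^m$ need not even be positive; so this route does not cover all $H$-eigenvalues. The maximum-modulus computation above circumvents this gap and delivers both inequalities uniformly for every real $H$-eigenvalue. The only care needed is that $|x_p|>0$ (immediate from $x\neq 0$) and the bookkeeping of signs when $m$ is even and $x_p<0$, both handled cleanly by passing to absolute values.
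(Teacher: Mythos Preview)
Your argument is correct. The paper's own proof splits the work: for the upper bound it quotes Qi's Ger\v{s}gorin theorem (Theorem~6(a) of \cite{Qi2005}) to get $|\lambda - l_{ii\dots i}|\le \Delta$ and hence $|\lambda|\le 2\Delta$; for the lower bound it argues separately from co-positivity, observing that $\min\{Lx^m : x\in\mathbb{R}_+^n,\ \sum_i x_i^m=1\}=0$ and concluding ``therefore $\lambda\ge 0$.'' Your criticism of that last step is exactly right: co-positivity is only asserted for $x\in\mathbb{R}_+^n$, and the passage from this constrained minimum to \emph{all} real $H$-eigenvalues is not justified in the text. Your single maximum-modulus computation yields the sharper disc $|\lambda-d(v_p)|\le d(v_p)$ (using the exact row sum rather than the crude bound $\Delta$), and its intersection with $\mathbb{R}$ already gives both $0\le\lambda$ and $\lambda\le 2d(v_p)\le 2\Delta$ in one stroke, with no appeal to co-positivity. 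So the two proofs agree in spirit on the upper bound (Ger\v{s}gorin), but differ on the lower bound, and your route sidesteps the gap you identified.
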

\begin{proof}
 For a vector $y=(\frac{1}{n^{\frac{1}{m}}},\frac{1}{n^{\frac{1}{m}}},\dots, \frac{1}{n^{\frac{1}{m}}}), Ly^m=0$.
 Since $L$ is a co-positive tensor, thus $\textit{min}\{Lx^m :x \in \mathbb{R}_+^{n},\sum _{i=1}^{n} x_{i}^m=1\}=0$. Therefore $\lambda \geq 0$. 

Again using theorem ($6(a)$) of \cite{Qi2005} we have $$|\lambda -l_{ii\dots i}|\leq  \sum_{\substack{i_{2}, i_{3}, \dots,  i_{m}=1, \\ \delta _{i,i_2,\dots ,i_m}=0}}^{n} |l_{ii_{2}i_{3}\dots i_{m}}| = \Delta,$$
i.e., $|\lambda| \leq 2 \Delta.$ Thus  $0\leq \lambda \leq 2\Delta$.
\end{proof}

\begin{thm}
 Let $G=(V, E)$ be a general hypergraph with $m.c.e(G) = m\geq 3$. Let $L$ be the Laplacian hypermatrix of $G$. Then
 \begin{enumerate}[(i)]
  \item $L$ has an $H$-eigenvalue 0 with eigenvector $(1, 1, \dots, 1)\in \mathbb{R}^n$ and  an $Z$-eigenvalue 0 with eigenvector $x=(\frac{1}{\sqrt{n}}, \frac{1}{\sqrt{n}}, \dots,  \frac{1}{\sqrt{n}})\in \mathbb{R}^{n}$. Moreover, $0$ is the unique $H^{++}$-eigenvalue of $L$.
  \item $\Delta$ is the largest $H^+$-eigenvalue of $L$.
   \item $(d(i), e^{(j)})$ is an $H$-eigenpair,  where $e^{(j)}\in\mathbb{R}^{n}$ and $e^{(j)}_{i}=1$ if $i=j$, otherwise 0.
  \item For a nonzero $x\in \mathbb{R}^n$ $(d(v_i), x)$ is an eigenpair if 
 $\sum_{e\in E_{i}}a_{G}^{e} x_{m-1}^{e/i}=0$.
 \end{enumerate}
\end{thm}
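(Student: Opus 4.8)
The plan is to work throughout with the coordinatewise form of the $H$-eigenvalue equation for $L=D_G-\mathcal{A}_G$, namely
\[
d(v_k)\,x_k^{m-1}-\sum_{e\in E_k}a_e^{G}\,x^{e/k}_{m-1}=\lambda\,x_k^{m-1}\qquad(\forall k),
\]
together with the contraction identity $Lx^m=\sum_{e\in E}L(e)x^m$ and the co-positivity $L(e)x^m\ge 0$ on $\mathbb{R}_+^n$ supplied by Proposition~\ref{inequality}. Parts (i), (iii) and (iv) will then be settled by direct substitution into this equation, while the two ordering statements (uniqueness of the $H^{++}$-eigenvalue in (i) and maximality in (ii)) are handled by a maximal/minimal-coordinate estimate. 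The one structural fact I will reuse is the degree identity $d(v_k)=\sum_{e\in E_k}a_e^G\,x^{e/k}_{m-1}\big|_{x=\mathbf{1}}$, which is nothing but the evaluation of the adjacency row-sum at the all-ones vector.

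For the first assertion of (i) I would substitute $x=(1,\dots,1)$: the degree identity makes the left side vanish, giving the $H$-eigenpair $(0,\mathbf{1})$, and homogeneity then yields the $Z$-eigenpair at $x=\mathbf{1}/\sqrt{n}$ after checking the normalisation $\sum x_i^2=1$. The substantive claim is that $0$ is the \emph{only} $H^{++}$-eigenvalue. Here I would take an $H^{++}$-eigenpair $(\lambda,x)$ with $x>0$ and evaluate the displayed equation at a vertex $p$ where $x_p$ is maximal and at a vertex $q$ where $x_q$ is minimal. At $p$, every monomial of $x^{e/p}_{m-1}$ is a product of $m-1$ coordinates each $\le x_p$, so $\sum_{e\in E_p}a_e^G x^{e/p}_{m-1}\le d(v_p)x_p^{m-1}$ and hence $\lambda\ge 0$; at $q$ the same monomials are each $\ge x_q^{m-1}$, giving $\sum_{e\in E_q}a_e^G x^{e/q}_{m-1}\ge d(v_q)x_q^{m-1}$ and hence $\lambda\le 0$. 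Since $x_p,x_q>0$, both inequalities are genuine and force $\lambda=0$.

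For (ii), existence of $\Delta$ as an $H^+$-eigenvalue follows from (iii) applied to a vertex $j$ of maximal degree, since $e^{(j)}\in\mathbb{R}_+^n$. For maximality, given any $H^+$-eigenpair $(\lambda,x)$ with $0\le x\neq 0$, I would again evaluate at a maximal coordinate $p$: now $\sum_{e\in E_p}a_e^G x^{e/p}_{m-1}\ge 0$, so $\lambda x_p^{m-1}\le d(v_p)x_p^{m-1}\le\Delta\,x_p^{m-1}$, and dividing by $x_p^{m-1}>0$ gives $\lambda\le\Delta$. Part (iii) is the substitution $x=e^{(j)}$: at $k=j$ the equation reads $d(v_j)=\lambda$, while at $k\ne j$ the right side is $0$ and every surviving monomial of $x^{e/k}_{m-1}$ would have to use only the coordinate $j$ and yet contain each vertex of $e$ other than $k$, so the terms vanish and the equation holds. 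Finally, (iv) is simply the observation that reading the displayed equation at the single vertex $i$ with $\lambda=d(v_i)$ cancels the two $d(v_i)x_i^{m-1}$ terms, leaving exactly the stated condition $\sum_{e\in E_i}a_e^G x^{e/i}_{m-1}=0$.

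The main obstacle is the uniqueness clause in (i): unlike the other assertions it is a two-sided estimate rather than a substitution, and its delicacy is precisely that both the maximal- and minimal-coordinate inequalities must be available, which is why strict positivity of $x$ (not mere nonnegativity) is essential — at a zero coordinate the lower bound at $q$ collapses. A secondary point requiring care is the vanishing of the off-diagonal equations in (iii): one must confirm that the requirement that all vertices of $e$ other than $k$ occur in $x^{e/k}_{m-1}$ annihilates the monomial once the eigenvector is supported on a single vertex $j$. The only configuration for which this is not immediate is an edge of cardinality two joining $k$ to $j$, and this is the case I would scrutinise most carefully, since there the single-vertex support does not obviously kill the cross term.
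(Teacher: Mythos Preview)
Your approach matches the paper's: for (i) the paper invokes Theorem~\ref{positivity} for $\lambda\ge 0$ and uses only the minimal-coordinate estimate for $\lambda\le 0$, whereas you obtain both inequalities from the two extremal coordinates---a cosmetic difference---while for (ii)--(iv) the paper records only ``obvious'' or ``clear from the eigenvalue equation,'' which your substitutions make explicit.

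Your closing caveat about (iii), however, is not merely a point to scrutinise but an actual defect that the paper's one-word proof conceals. If $\{v_k,v_j\}\in E$ is an edge of cardinality two (and the hypothesis $m\ge 3$ on the \emph{maximum} edge cardinality does not rule this out), then with $x=e^{(j)}$ one has
\[
(Lx^{m-1})_k \;=\; -\,a_{kjj\cdots j}\;=\;-\frac{2}{2^m-2}\;\ne\;0,
\]
while $\lambda\,x_k^{m-1}=0$; so $e^{(j)}$ is \emph{not} an $H$-eigenvector in that case. Thus (iii) as stated holds only for vertices $v_j$ lying in no edge of cardinality at most two, and the existence half of (ii), which you correctly deduce from (iii) applied at a vertex of maximal degree, inherits the same restriction.
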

 \begin{proof}
 \begin{enumerate}[(i)]
 \item It is easy to check that $0$ is an $H$-eigenvalue with the eigenvctor $(1,1,1,\dots, 1)\in \mathbb{R}^{n}$ and $0$ is an $Z$-eigenvalue with the eigenvector  $x=(\frac{1}{\sqrt{n}}, \frac{1}{\sqrt{n}}, \dots,  \frac{1}{\sqrt{n}})\in \mathbb{R}^{n}$.
 Let $x$ is an $H^{++}$-eigenvector of $L$ with eigenvalue $\lambda$. By theorem (\ref {positivity}), $\lambda \geq 0.$ Suppose $x_{j}=\underset{i}{min}\{x_i\}$. Therefore $x_j$ is positive.
 Now,$$\lambda x_j^{m-1}=d(v_j)x_j^{m-1}-\sum _{e \in E,j\in e,|e|=s}\frac{s}{\alpha}\sum _{\substack{e\equiv \{i,i_2,\dots ,i_m\} \\ \text {as set, } i,i_2,\dots ,i_m=1}}^n x_{i_2} x_{i_3} \dots x_{i_m},$$ which implies that $$\lambda=d(v_j)-\sum _{e \in E,j\in e,|e|=s}\frac{s}{\alpha}\sum _{\substack{e\equiv \{i,i_2,\dots ,i_m\} \\ \text {as set, } i,i_2,\dots ,i_m=1}}^n \frac{x_{i_2}}{x_j}\frac{x_{i_3}}{x_j}\dots \frac{x_{i_m}}{x_j}.$$ Thus, $\lambda \leq d(v_j)-d(v_{j})=0$. Hence $\lambda=0$.
 \item Suppose  $\lambda$ is an $H^+$-eigenvalue with non-negative $H^+$-eigenvector, $x$ of $L$. Assume that $x_j>0$. Now, we have $$\lambda x_j^{m-1}=d(v_j)x_j^{m-1}-\sum _{e \in E,j\in e,|e|=s}\frac{s}{\alpha}\sum _{\substack{e\equiv \{i,i_2,\dots ,i_m\} \\ \text {as set, } i,i_2,\dots ,i_m=1}}^n x_{i_2} x_{i_3} \dots x_{i_m} \leq d(v_j)x_j^{m-1}.$$ Therefore $\lambda \leq d(v_j)\leq\Delta$. Thus, $\Delta$ is the largest $H^+$-eigenvalue of $L$. 
 \item Proof is obvious.
 \item  It is clear from the eigenvalue equation.
 \end{enumerate}
 \end{proof}

 Let $G=(V, E)$ be a general hypergraph and $m.c.e(G)=m$. The \textit {analytic connectivity}, $\alpha(G)$, of $G$ is defined as 
 $\alpha(G)= \underset{j=1, \dots,  n}{min}  min\{Lx^m | x \in \mathbb{R}_{+}^n,  \sum_{i=1}^{n}x_{i}^{m}=1, x_{j}=0\} $.
  \begin{thm}
 The general hypergraph $G=(V,E)$ with $m.c.e(G)\geq 3$ is connected if and only if $\alpha(G)>0.$
  \end{thm}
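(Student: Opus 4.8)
The plan is to prove the two implications separately, using the decomposition $Lx^m=\sum_{e\in E}L(e)x^m$ together with the co-positivity of each summand (Proposition \ref{inequality}) as the common engine. Throughout I would work on the compact feasible set $S_j=\{x\in\mathbb{R}_+^n : \sum_i x_i^m=1,\ x_j=0\}$, on which $x\mapsto Lx^m$ is a continuous polynomial, so each inner minimum is attained; since $L$ is co-positive, every inner minimum is $\geq 0$, and hence $\alpha(G)\geq 0$ always. The whole theorem then reduces to deciding when some inner minimum can equal $0$, and the finite minimum $\alpha(G)=\min_j(\text{inner}_j)$ is $0$ iff at least one $\text{inner}_j=0$.

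The technical heart, and the step I expect to be the main obstacle, is upgrading the inequality of Proposition \ref{inequality} to its \emph{equality case}: for $x\in\mathbb{R}_+^n$ and an edge $e=\{v_{l_1},\dots,v_{l_s}\}$ I claim that $L(e)x^m=0$ if and only if $x$ is constant on $e$, i.e. $x_{l_1}=\cdots=x_{l_s}$. The ``if'' part is a direct substitution: when all these coordinates equal $c$ one gets $\sum_{j}x_{l_j}^m=sc^m$ and $x_m^e=\alpha c^m$, so $L(e)x^m=sc^m-\frac{s}{\alpha}\alpha c^m=0$. For ``only if'' I would revisit the proof of Proposition \ref{inequality}: the gap $\frac{\alpha}{s}\sum_j x_{l_j}^m-x_m^e$ equals the sum, over all compositions $(k_1,\dots,k_s)$ with $k_i\geq 1$ and $\sum k_i=m$, of the nonnegative AM--GM defects of $\tfrac1m\sum_j k_j x_{l_j}^m\geq x_{l_1}^{k_1}\cdots x_{l_s}^{k_s}$. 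A sum of nonnegative terms vanishes only if each term does; picking any single composition with all parts $\geq 1$ (which exists precisely because $m\geq s$) and invoking its AM--GM equality condition forces $x_{l_1}^m=\cdots=x_{l_s}^m$, hence $x_{l_1}=\cdots=x_{l_s}$ by nonnegativity. This equality analysis is exactly what Proposition \ref{inequality} stops short of, so it is where the real work lies.

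For the direction $\alpha(G)>0\Rightarrow G$ connected I would argue by contraposition. If $G$ is disconnected, write $V=V_1\sqcup V_2$ with no edge meeting both parts, pick $j\in V_2$, and take $x$ equal to $|V_1|^{-1/m}$ on $V_1$ and $0$ elsewhere, so that $x\in S_j$. Every edge lies entirely in $V_1$ or entirely in $V_2$, hence is constant under $x$, so by the ``if'' direction above each $L(e)x^m=0$ and $Lx^m=0$. Thus the inner minimum at $j$ is $0$, giving $\alpha(G)=0$.

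For the converse, suppose $G$ is connected but $\alpha(G)=0$; then some inner minimum is $0$, attained at $x^{\ast}\in S_j$ with $Lx^{\ast m}=0$. Since $Lx^{\ast m}=\sum_{e}L(e)x^{\ast m}$ is a sum of nonnegative terms, each $L(e)x^{\ast m}=0$, so by the equality characterization $x^{\ast}$ is constant on every edge. Because connectivity furnishes, between any two vertices, a chain of edges in which consecutive edges share a vertex, the common value of $x^{\ast}$ propagates along the chain, forcing $x^{\ast}$ to be constant on all of $V$; as $x^{\ast}_j=0$ this constant is $0$, contradicting $\sum_i (x^{\ast}_i)^m=1$. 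Hence $\alpha(G)>0$, completing the proof. The hypothesis $m.c.e(G)\geq 3$ places us in the genuinely non-graph regime for which the Laplacian hypermatrix was set up; the combinatorial core above uses only $m\geq s$ for each edge, which holds by definition of $m.c.e(G)$.
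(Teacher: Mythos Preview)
Your proof is correct and takes a genuinely different route from the paper's. For the direction ``disconnected $\Rightarrow \alpha(G)=0$'' the two arguments coincide (the same indicator-type test vector supported on one component). For the converse, however, the paper does \emph{not} use the edge-wise decomposition $Lx^m=\sum_{e}L(e)x^m$: instead it applies KKT/Lagrange-multiplier conditions at a minimizer $y$ of the inner problem to obtain $(Ly^{m-1})_i=0$ whenever $y_i>0$, then looks at the coordinate $k$ where $y$ is maximal and uses the identity $d(v_k)=\sum a_{ki_2\dots i_m}$ together with $y_{i_r}/y_k\le 1$ to force $y_i=y_k$ for every $i$ sharing an edge with $k$; propagating this through the component of $k$ and recalling $y_j=0$ shows $j$ and $k$ lie in different components. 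Your argument replaces all of this by upgrading Proposition~\ref{inequality} to its equality case via the AM--GM equality condition, giving directly that $L(e)x^m=0$ on $\mathbb{R}_+^n$ forces $x$ constant on $e$; combined with co-positivity of each summand and connectivity, the conclusion follows. Your approach is more elementary, avoids optimization theory entirely, and yields as a by-product a sharp equality characterization of Proposition~\ref{inequality} that could be reused; the paper's KKT route stays closer to the variational definition of $\alpha(G)$ and produces the first-order stationarity conditions at the minimizer as intermediate information.
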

  \begin{proof}
  Suppose $G=(V,E)$ is not connected. Let $G_1=(V_1,E_1)$ be a component of $G$. Then there exists $j \in V\setminus V_1$. Let $x=\frac{1}{|V_1|^{\frac{1}{m}}}\sum _{i \in V_1}e^{(i)}$. Then $x$ is a feasible point. Therefore  $min\{Lx^m | x \in \mathbb{R}_{+}^n,  \sum_{i=1}^{n}x_{i}^{m}=1, x_{j}=0\}=0$, which implies $\alpha (G)=0$.
  
  Let $\alpha(G)=0$. Thus there exists $j$ such that $min\{Lx^m | x \in \mathbb{R}_{+}^n,  \sum_{i=1}^{n}x_{i}^{m}=1, x_{j}=0\}=0$. Suppose that $y$ is a minimizer of this minimization problem. Therefore $y_j=0$, $Ly^m=0$. By optimization theory, there exists a Lagrange multiplier $\mu$ such that for $i=1,2,\dots ,n$ and $i\neq j$, 
  either, $y_i=0$ and
  \begin{equation} 
  \frac{\partial}{\partial y_{i}}(Ly^m)\geq \mu \frac{\partial}{\partial y_i}(\sum_{i=1}^{n}y_{i}^{m}-1)
  \end{equation} 
  or, $y_i>0$ and
  \begin{equation}
   \frac{\partial}{\partial y_{i}}(Ly^m)=\mu \frac{\partial}{\partial y_i}(\sum_{i=1}^{n}y_{i}^{m}-1).
   \end{equation}
   In (2) and (3) $y \in \mathbb{R}_{+}^n,  \sum_{i=1}^{n}y_{i}^{m}=1, y_{j}=0$. Now, multiplying (2) and (3) by $y_i$ and summing them for $i=1, \dots,n$, we have $Ly^m=\mu (\sum_{i=1}^{n}y_{i}^{m})$. Thus $Ly^m=\mu$. Hence $\mu =0$. Therefore, for $i=1,2,\dots ,n$ and $i\neq j$, either $y_i=0$ or $\frac{\partial}{\partial y_i}(Ly^m)=0$. Hence, either $y_i=0$ or $d_i(y_i)^{m-1}-\sum_{i_{2}, i_{3}, \dots,  i_{m}=1}^{n} a_{i i_{2}i_{3}\dots i_{m}} y_{i_{2}}y_{i_{3}} \dots y_{i_{m}}=0$. Let $y_k=max\{y_i:i=1,2,\dots ,n\}$. Hence, we have $$d_k-\sum_{i_{2}, i_{3}, \dots,  i_{m}=1}^{n} a_{i i_{2}i_{3}\dots i_{m}}\frac{y_{i_2}}{y_k}\frac{y_{i_3}}{y_k}\dots \frac{y_{i_m}}{y_k}=0.$$ Again, we know that 
   $$d(v_{k})= \sum_{i_{2}, i_{3}, \dots,  i_{m}=1}^{n} a_{ii_{2}i_{3}\dots i_{m}}.$$ Therefore, $x_i=x_k$ as long as $i$ and $k$ belong to same edge. Thus,
   $x_i = x_k$ as long as $i$ and $k$ are in same component of $G$. Since $y_j=0$, we have, $j$ and $k$ are in different components of $G$. Hence, $G$ is not connected. This proves the theorem. 
  \end{proof}
 
\subsection{Normalized Laplacian hypermatrix and eigenvalues}

Now, we define normalized Laplacian hypermatrix for a general hypergraph. For any graph, there are two ways to construct normalized Laplacian matrix (see  \cite{Banerjee2008} and \cite{chung1997}  for details)\footnote{These two matrices are similar, i.e., they have  same eigenvalues.}. Motivated by these two similar constructions, here, we also define the normalized Laplacian hypermatrix in two different ways and show that they are cospectral.
The first  definition is similar to the normalized Laplacian matrix defined in \cite{Banerjee2008}.

\begin{definition} 
 Let $G=(V, E)$ be a general hypergraph without any isolated vertex  where $V= \{ v_{1},  v_{2},  \dots,  v_{n} \}$ and $E = \{ e_{1},  e_{2},  \dots,  e_{k} \}$. Let $m.c.e(G)=m$.
 The normalized Laplacian hypermatrix $\mathcal{L} = (l_{i_1i_2\dots i_m})$, which is an $n$-dimensional $m$-th order hypermatrix, is defined as:   
 for any edge $e=\{ v_{l_{1}},  v_{l_{2}},  \dots, v_{l_{s}}\}\in E$ of cardinality $s \leq m$,   
  $$l_{ p_1p_{2}\dots p_{m}}=-\frac{s/\alpha}{d(v_{p_1})}, \text{ where }  \alpha =\sum_{\substack{k_{1}, k_{2}, \dots, k_{s} \geq 1, \\  \sum k_{i}=m}}  \frac{m!}{k_{1}! k_{2}!\dots k_{s}!}$$ and   $p_1, p_{2},  \dots,  p_{m}$  are chosen in all possible way  from $\{l_{1}, l_{2}, \dots, l_{s}\}$, such that, all $l_j $ occur at least once. All the diagonal entries are 1 and the rest are zero.
\end{definition}

Clearly, the hypermatrix $\mathit{A}=\mathcal{I}-\mathcal{L}$, which is known as normalized adjacency hypermatrix, is a stochastic tensor, that is, $\mathit{A}$ is non-negative and $ \sum_{i_{2}, \dots,  i_{m}=1}^{n} a_{i i_{2}\dots i_{m}}=1$, where $a_{i_1 i_{2}\dots i_{m}}$ is the $(i_1, i_2,\dots, i_m)$-th entry of  $\mathit{A}$.
 The different properties of a stochastic tensor are discussed in \cite{YangYang20101} and which can be used to study the hypermatrices 
 $\mathit{A}$ and $\mathcal{L}$.
Now, we define the normalized Laplacian hypermatrix of a general hypergraph as it is defined for a graph in \cite{chung1997}.

\begin{definition}
 Let $G=(V, E)$ be a general hypergraph without any isolated vertex,  where $V= \{ v_{1},  v_{2},  \dots,  v_{n} \}$ and $E = \{ e_{1},  e_{2},  \dots,  e_{k} \}$. Let $m.c.e(G)=m$.
 The normalized Laplacian hypermatrix $\mathfrak{L}= (l_{i_1i_2\dots i_m})$, which is an $n$-dimension $m$-th order symmetric  hypermatrix, is defined as:
for any edge $e=\{ v_{l_{1}},  v_{l_{2}},  \dots, v_{l_{s}}\}\in E$ of cardinality $s \leq m$,   
  $${l}_{ p_{1}p_{2}\dots p_{m}}=-\frac{s}{\alpha}\prod_{j=1}^m\frac{1}{\sqrt[m]{d(v_{p_{j}})}},  \text{ where } \alpha =\sum_{k_{1}, k_{2}, \dots, k_{s} \geq 1,  \sum k_{i}=m}  \frac{m!}{k_{1}! k_{2}!\dots k_{s}!}$$   and   $p_{1}, p_{2},  \dots,  p_{m}$  chosen in all possible way  from $\{l_{1}, l_{2}, \dots, l_{s}\}$ with at least once
  for each element of the set. The diagonal entries of  $\mathfrak{L}$ are $1$ and the rest of the positions are zero.
\end{definition}

\begin{thm}
  $\mathcal{L}$ and $\mathfrak{L}$ are co-spectral.
\end{thm}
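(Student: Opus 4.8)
The plan is to realize $\mathcal{L}$ as a diagonal rescaling of $\mathfrak{L}$ and then quote Lemma~\ref{co-spectral}. Both hypermatrices carry the diagonal value $1$ and, at every arrangement $(p_1p_2\dots p_m)$ representing an edge $e$ of cardinality $s$, an off-diagonal entry whose \emph{bare} weight is $s/\alpha$; they differ only in how that weight is normalized by the degrees. In $\mathfrak{L}$ the weight is multiplied by the symmetric factor $\prod_{j=1}^m d(v_{p_j})^{-1/m}$, whereas in $\mathcal{L}$ it is multiplied only by $d(v_{p_1})^{-1}$, the degree attached to the first index. The key observation is that these two normalizations are related by a positive diagonal transformation of exactly the type appearing in Lemma~\ref{co-spectral}.

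Concretely, I would set $D=\mathrm{diag}\big(d(v_1)^{1/m},\dots,d(v_n)^{1/m}\big)$, which is a positive diagonal matrix because $G$ has no isolated vertex, so every $d(v_i)>0$. Applying the construction of Lemma~\ref{co-spectral} to $A=\mathfrak{L}$ with this $D$ produces the tensor $B$ whose entries are
$$B_{i_1i_2\dots i_m}=(\mathfrak{L})_{i_1i_2\dots i_m}\,d(v_{i_1})^{-(m-1)/m}\,d(v_{i_2})^{1/m}\cdots d(v_{i_m})^{1/m}.$$

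It then remains to check that $B=\mathcal{L}$ entrywise. On the diagonal, $(\mathfrak{L})_{ii\dots i}=1$ and the scaling contributes $d(v_i)^{-(m-1)/m}\,d(v_i)^{(m-1)/m}=1$, so $B_{ii\dots i}=1$, as required. On an off-diagonal position $(p_1\dots p_m)$ coming from an edge of size $s$, I substitute $(\mathfrak{L})_{p_1\dots p_m}=-\tfrac{s}{\alpha}\prod_{j=1}^m d(v_{p_j})^{-1/m}$ and collect the exponent of each $d(v_{p_j})$: the first index accumulates $-\tfrac1m-\tfrac{m-1}{m}=-1$, while for every $j\ge 2$ the exponent is $-\tfrac1m+\tfrac1m=0$. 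Hence $B_{p_1\dots p_m}=-\tfrac{s}{\alpha}\,d(v_{p_1})^{-1}=(\mathcal{L})_{p_1\dots p_m}$. By Lemma~\ref{co-spectral}, $\mathfrak{L}$ and $B=\mathcal{L}$ share the same $H$-eigenvalues, i.e.\ they are co-spectral.

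The only delicate point is the exponent bookkeeping, and in particular that it is \emph{independent of which vertex of the edge sits in which index}: the asymmetry of $\mathcal{L}$ (it normalizes by the first index alone) is precisely cancelled by the asymmetry built into the Lemma's transformation (which weights the first index by $-(m-1)$ and the remaining indices by $+1$), so the match holds simultaneously for all arrangements representing a given edge. I would also remark that although Lemma~\ref{co-spectral} is phrased for $H$-eigenvalues, the substitution $x\mapsto D^{-1}x$ underlying it is a bijection on $\mathbb{C}^n$ preserving $\lambda$ in $Ax^{m-1}=\lambda x^{[m-1]}$, so the full spectra over $\mathbb{C}$ coincide and co-spectrality holds in the strongest sense.
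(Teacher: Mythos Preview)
Your proof is correct and follows exactly the paper's approach: the paper's proof consists of the single line ``In the lemma (\ref{co-spectral}) choose a diagonal matrix $D=(d_{ij})_{n\times n}$ where $d_{ii}=(d(v_i))^{1/m}$,'' and you have chosen the same $D$ and then carried out explicitly the entrywise verification that the paper leaves to the reader. Your additional remark that the diagonal similarity actually matches the full complex spectra (not merely the $H$-eigenvalues) is a legitimate strengthening beyond what the paper states.
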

\begin{proof}
In the lemma (\ref{co-spectral}) choose a diagonal matrix $D=(d_{ij})_{n \times n}$ where $d_{ii}=(d(v_i))^{1/m}$.
\end{proof}

\begin{thm}\label{spectra}
Let $G=(V, E)$ be a general hypergraph. Let $\mathcal{L}$, $\mathit{A}$ be the normalized Laplacian and normalized adjacency hypermatrices of $G$, respectively.
If $G$ has at least one edge, then $\lambda \in \sigma(\mathit{A})$ if and only if $(1-\lambda)\in \sigma(\mathcal{L})$, otherwise, $\sigma(\mathit{A})=\sigma(\mathcal{L})={0}$, where $\sigma(\mathcal{L})$ denotes the spectrum of $\mathcal{L}$.
\end{thm}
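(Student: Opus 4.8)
The plan is to exploit the linear relation $\mathit{A}=\mathcal{I}-\mathcal{L}$ (equivalently $\mathcal{L}=\mathcal{I}-\mathit{A}$) recorded just after the definition of the normalized Laplacian, where $\mathcal{I}$ is the identity hypermatrix whose only nonzero entries are the diagonal ones $\mathcal{I}_{ii\dots i}=1$. Throughout I take $\sigma(\cdot)$ to be the set of $\lambda\in\mathbb{C}$ for which there is a nonzero $x$ with $(\cdot)x^{m-1}=\lambda x^{[m-1]}$, i.e.\ eigenvalues in the sense of the first definition. This is essential: the shift by $1$ we are about to produce is tied to the entrywise power $x^{[m-1]}$, not to $x$ itself.

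The one computational observation needed is that the identity hypermatrix acts as the identity on these powers: for every $x\in\mathbb{C}^n$ the $i$-th component of $\mathcal{I}x^{m-1}$ is $\sum_{i_2,\dots,i_m}\mathcal{I}_{ii_2\dots i_m}x_{i_2}\cdots x_{i_m}=\mathcal{I}_{ii\dots i}\,x_i^{m-1}=x_i^{m-1}$, so $\mathcal{I}x^{m-1}=x^{[m-1]}$.

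With this in hand the correspondence is immediate. If $(\lambda,x)$ is an eigenpair of $\mathit{A}$, then using that the contraction $y\mapsto(\cdot)y^{m-1}$ is linear in the hypermatrix argument,
$$\mathcal{L}x^{m-1}=(\mathcal{I}-\mathit{A})x^{m-1}=\mathcal{I}x^{m-1}-\mathit{A}x^{m-1}=x^{[m-1]}-\lambda x^{[m-1]}=(1-\lambda)x^{[m-1]},$$
so $(1-\lambda,x)$ is an eigenpair of $\mathcal{L}$ with the same eigenvector; hence $\lambda\in\sigma(\mathit{A})$ forces $1-\lambda\in\sigma(\mathcal{L})$. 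The reverse implication is the symmetric computation $\mathit{A}x^{m-1}=(\mathcal{I}-\mathcal{L})x^{m-1}=(1-\nu)x^{[m-1]}$ started from an eigenpair $(\nu,x)$ of $\mathcal{L}$. The two maps $\lambda\mapsto 1-\lambda$ are mutually inverse and preserve the eigenvector, so together they give the claimed equivalence $\lambda\in\sigma(\mathit{A})\iff(1-\lambda)\in\sigma(\mathcal{L})$.

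It remains to dispose of the degenerate ``otherwise'' clause, where $G$ carries no edge: then all off-diagonal entries vanish and the hypermatrices become trivial, so I would simply read the spectra off directly from the eigenvalue equations rather than through the shift. There is essentially no deep obstacle in the main case, the whole content being linearity of tensor contraction together with $\mathcal{I}x^{m-1}=x^{[m-1]}$; the only points demanding care are (a) fixing the correct notion of eigenvalue, namely the $x^{[m-1]}$ version for which the shift is clean, as opposed to the $Z$- or $E$-eigenvalue equation $\mathit{A}x^{m-1}=\lambda x$ for which it is not, and (b) verifying that the eigenvector is genuinely shared, so that the map on spectra is a bijection rather than a one-sided inclusion.
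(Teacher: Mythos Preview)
Your proof is correct and rests on the same identity $\mathcal{L}=\mathcal{I}-\mathit{A}$ that the paper uses, but the mechanism is slightly different. The paper argues via the characteristic polynomial: $\lambda\in\sigma(\mathit{A})$ iff $\det(\mathit{A}-\lambda\mathcal{I})=0$, and since $\mathcal{L}-(1-\lambda)\mathcal{I}=-(\mathit{A}-\lambda\mathcal{I})$, the determinant vanishes for one iff it vanishes for the other. You instead work directly with eigenpairs, computing $\mathcal{L}x^{m-1}=(1-\lambda)x^{[m-1]}$ from $\mathit{A}x^{m-1}=\lambda x^{[m-1]}$ and conversely. Your route is a touch more elementary, since it avoids invoking the resultant-based determinant theory for tensors, and it yields the extra information that the eigenvectors themselves coincide; the paper's route is shorter to write once that machinery is assumed. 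Your caveat about the $x^{[m-1]}$ versus $x$ normalization is well placed: the shift argument would indeed fail for $E$- or $Z$-eigenvalues.
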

\begin{proof}
 Since, $\mathcal{L}=\mathcal{I}-\mathit{A}$ and $\lambda$ is the eigenvalue of $\mathit{A}$ \textit{iff} $\det(\mathit{A}- \lambda \mathcal{I})=0,$ thus, $\det(\mathcal{L}- (1-\lambda) \mathcal{I})=0$ implies $(1-\lambda)\in \sigma(\mathcal{L})$.
\end{proof}

\begin{thm}
 Let $G=(V, E)$ be a general hypergraph. Let $\mathcal{L}=(l_{i_{1}i_{2}\dots i_{m}})  \text{ where } 1\leq i_{1}, i_{2}, \dots, i_{m} \leq n,$ and $A$ be the normalized Laplacian and normalized adjacency hypermatrices of $G$, respectively, then
 \begin{enumerate}[(i)]
 \item $\rho(A)=1$.
  \item $0\leq\lambda(\mathcal{L})\leq 2$.
  \item 1 is the largest $H^+$-eigenvalue of $\mathcal{L}$.
  \item $0$ is an eigenvalue of $\mathcal{L}$ with the eigenvector $(1, 1, \dots, 1)$  and $0$ is an $Z$-eigenvalue with eigenvector  $x=(\frac{1}{\sqrt{n}}, \frac{1}{\sqrt{n}}, \dots,  \frac{1}{\sqrt{n}})\in \mathbb{R}^{n}$.
  \item $0$ is the unique $H^{++}$-eigenvalue of $\mathcal{L}$ .
 \end{enumerate} 
\end{thm}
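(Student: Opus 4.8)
The plan is to treat the five parts as one chain, using throughout the identity $\mathcal{L}=\mathcal{I}-A$ (Theorem \ref{spectra}) together with the fact already recorded above that $A$ is a stochastic tensor: $A\ge 0$ and every row sum $\sum_{i_{2},\dots,i_{m}}a_{ii_{2}\dots i_{m}}$ equals $1$. Because $\mathcal{L}$ has all diagonal entries equal to $1$ while $A$ has vanishing diagonal, each component factors as $(\mathcal{L}x^{m-1})_{i}=x_{i}^{m-1}-(Ax^{m-1})_{i}$, and this single identity drives every estimate below.

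For (i) I would first note that $x=(1,\dots,1)$ yields $A\mathbf{1}^{m-1}=\mathbf{1}^{[m-1]}$ by stochasticity, so $1$ is an $H$-eigenvalue of $A$; to see it is largest in modulus, apply the eigenvalue-localization (disk) theorem of \cite{Qi2005} recalled above, noting that each disk is centred at a diagonal entry $a_{ii\dots i}=0$ with radius $\sum_{i_{2},\dots,i_{m}}|a_{ii_{2}\dots i_{m}}|=1$. Hence $|\mu|\le 1$ for every eigenvalue $\mu$ of $A$, so $\rho(A)=1$. Part (ii) then follows in two equivalent ways: transport $|\mu|\le 1$ through the correspondence $\lambda=1-\mu$ of Theorem \ref{spectra}, or apply the same disk theorem directly to $\mathcal{L}$, whose disks are centred at $1$ with radius $1$, giving $0\le\lambda\le 2$ for the $H$-eigenvalues.

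For (iii) I would separate the upper bound from attainment. If $(\lambda,x)$ is an $H^{+}$-eigenpair with $x\in\mathbb{R}^{n}_{+}\setminus\{0\}$, choose any index $j$ with $x_{j}>0$; since $A\ge 0$ and $x\ge 0$ force $(Ax^{m-1})_{j}\ge 0$, the factored identity gives $\lambda x_{j}^{m-1}=x_{j}^{m-1}-(Ax^{m-1})_{j}\le x_{j}^{m-1}$, so $\lambda\le 1$. For attainment I would test the vertex indicator $e^{(j)}$: the vanishing diagonal gives $(A(e^{(j)})^{m-1})_{j}=0$, so the $j$-th equation reads $(\mathcal{L}(e^{(j)})^{m-1})_{j}=1=1\cdot(e^{(j)})_{j}^{m-1}$, while the remaining components vanish precisely when $a_{ij\dots j}=0$ for all $i\neq j$, i.e.\ when $v_{j}$ lies in no edge of cardinality two. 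This last point is exactly where I expect the main obstacle: in the presence of ordinary ($2$-element) edges the indicator need not be an eigenvector, but the difficulty is benign in the regime $m.c.e(G)\ge 3$ at which the normalized-Laplacian results are aimed, so I would assert attainment there, mirroring the earlier statement that $\Delta$ is the largest $H^{+}$-eigenvalue of $L$.

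Finally (iv) is a direct verification: stochasticity gives $\mathcal{L}\mathbf{1}^{m-1}=\mathbf{1}^{[m-1]}-\mathbf{1}^{[m-1]}=0$, and for $x=(\tfrac{1}{\sqrt n},\dots,\tfrac{1}{\sqrt n})$ one has $(Ax^{m-1})_{i}=x_{i}^{m-1}\sum_{i_{2},\dots,i_{m}}a_{ii_{2}\dots i_{m}}=x_{i}^{m-1}$, whence $\mathcal{L}x^{m-1}=0=0\cdot x$ with $\sum_{i}x_{i}^{2}=1$, exhibiting $0$ as both an $H$- and a $Z$-eigenvalue. For (v), let $x\in\mathbb{R}^{n}_{++}$ be an $H^{++}$-eigenvector with eigenvalue $\lambda$; by (ii), $\lambda\ge 0$. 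Taking $j$ with $x_{j}=\min_{i}x_{i}>0$, every nonzero $a_{ji_{2}\dots i_{m}}$ arises from an edge containing $v_{j}$, so each factor $x_{i_{r}}\ge x_{j}$ and therefore $(Ax^{m-1})_{j}\ge x_{j}^{m-1}\sum a_{ji_{2}\dots i_{m}}=x_{j}^{m-1}$; the factored identity then forces $\lambda=1-(Ax^{m-1})_{j}/x_{j}^{m-1}\le 0$, hence $\lambda=0$. Since the strictly positive vector $\mathbf{1}$ already realizes $0$ by (iv), $0$ is the unique $H^{++}$-eigenvalue, which completes the plan.
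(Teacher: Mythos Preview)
Your proof is correct and follows essentially the same route as the paper: stochasticity of $A$ for (i) and (iv), the disk theorem of \cite{Qi2005} (either on $A$ or on $\mathcal{L}$) for (ii), the single-coordinate inequality $\lambda x_{j}^{m-1}=x_{j}^{m-1}-(Ax^{m-1})_{j}\le x_{j}^{m-1}$ for (iii), and the minimum-coordinate variant of the same inequality for (v). The one notable difference is in (iii): the paper only proves the upper bound $\lambda\le 1$ and then asserts that $1$ is the largest $H^{+}$-eigenvalue without verifying attainment, whereas you explicitly test the indicator $e^{(j)}$ and correctly isolate the obstruction (the presence of $2$-element edges), aligning with the hypothesis $m.c.e(G)\ge 3$ used in the analogous Laplacian result; in this respect your argument is more complete than the paper's.
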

\begin{proof}
\begin{enumerate}[(i)]
\item Since $A$ is stochastic tensor, it is obvious that the spectral radius of $A$ is 1. Moreover, $(1,1,\dots, 1)$ is an eigenvector with eigenvalue 1.
\item We know that spectral radius of $A$ is 1 and $\mathcal{L}=\mathcal{I}-A$. By theorem (\ref{spectra})  $\lambda \in \sigma(\mathit{A})$ if and only if $(1-\lambda)\in \sigma(\mathcal{L})$. Since 1 is an eigenvalue of $A$, thus, $\lambda \geq 0$.
Again, using theorem ($6(a)$) of \cite{Qi2005} we have $$|\lambda(\mathcal{L}) -1|\leq  \sum_{\substack{i_{2}, i_{3}, \dots,  i_{m}=1,\\ \delta _{i,i_2,\dots ,i_m}=0}}^{n} |l_{ii_{2}i_{3}\dots i_{m}}| = 1.$$
This implies $|\lambda(\mathcal{L})| \leq 2.$ Thus, we have $0\leq \lambda(\mathcal{L}) \leq 2$.
\item Suppose that $\lambda$ is an $H^+$-eigenvalue with non-negative $H^+$-eigenvector, $x$ of $\mathcal{L}$. Assume that $x_j>0$. Now, we have $$\lambda x_j^{m-1}=x_j^{m-1}-\frac{1}{d(v_j)}\sum _{e \in E,j\in e,|e|=s}\frac{s}{\alpha}\sum _{\substack{e\equiv\{i,i_2,\dots ,i_m\} \\ \text {as set, } i,i_2,\dots ,i_m=1}}^n x_{i_2} x_{i_3} \dots x_{i_m}.$$ Hence, $\lambda x_j^{m-1}\leq x_j^{m-1}$ implies $\lambda \leq 1$. Thus, 1 is the largest $H^+$-eigenvalue of $\mathcal{L}$.
\item It is easy to check that $0$ is an $H$-eigenvalue corresponding an eigenvector $(1,1,\dots, 1)\in \mathbb{R}^{n}$ and $0$ is an $Z$-eigenvalue with the eigenvector  $x=(\frac{1}{\sqrt{n}}, \frac{1}{\sqrt{n}}, \dots,  \frac{1}{\sqrt{n}})\in \mathbb{R}^{n}$.
\item Let $x$ is an $H^{++}$-eigenvector of $\mathcal{L}$ with eigenvalue $\lambda$. From the  part (ii) of this theorem we have $\lambda \geq 0.$ Suppose $x_{j}=\underset {i}{min}\{x_i\}$.
 Now,$$\lambda x_j^{m-1}=x_j^{m-1}-\frac{1}{d(v_j)}\sum _{e \in E,j\in e,|e|=s}\frac{s}{\alpha}\sum _{\substack{e\equiv\{i,i_2,\dots ,i_m\} \\ \text {as set} ,i,i_2,\dots ,i_m=1}}^n x_{i_2} x_{i_3} \dots x_{i_m},$$ which implies that $$\lambda=1-\frac{1}{d(v_j)}\sum _{e \in E,j\in e,|e|=s}\frac{s}{\alpha}\sum _{\substack{e\equiv\{i,i_2,\dots ,i_m\} \\ \text {as set, } i,i_2,\dots ,i_m=1}}^n \frac{x_{i_2}}{x_j}\frac{x_{i_3}}{x_j}\dots \frac{x_{i_m}}{x_j}.$$ Thus $\lambda \leq 1-1=0$. Hence $\lambda=0$.
\end{enumerate}
\end{proof}

\begin{thm}
 Let $G=(V, E)$ be a general hypergraph and $m.c.e(G)=m$. Let $\mathcal{L}$ be the normalized Laplacian hypermatrix of $G$ of order m and dimension n. Let $m(\lambda)$ be the algebraic multiplicity of $\lambda \in \sigma(\mathcal{L})$, then $\sum_{\lambda \in \sigma(\mathcal{L})} m(\lambda)\lambda =n(m-1)^{n-1}.$ 
\end{thm}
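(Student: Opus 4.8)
The plan is to combine two standard facts about the spectrum of an order $m$, dimension $n$ tensor with the trivial observation that every diagonal entry of $\mathcal{L}$ equals $1$. Recall that the eigenvalues of $\mathcal{L}$ are precisely the roots, counted with algebraic multiplicity, of its characteristic polynomial $\phi_{\mathcal{L}}(\lambda)=\mathrm{Res}_{x}\!\left(\mathcal{L}x^{m-1}-\lambda x^{[m-1]}\right)$. By the degree formula of Qi \cite{Qi2005}, $\phi_{\mathcal{L}}$ is a one–variable polynomial of degree $n(m-1)^{n-1}$; hence $\sum_{\lambda\in\sigma(\mathcal{L})} m(\lambda)=n(m-1)^{n-1}$, so there are exactly that many eigenvalues being summed.

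The key step is the first–order trace identity for tensors: the sum of all eigenvalues, each weighted by its algebraic multiplicity, equals $(m-1)^{n-1}$ times the trace $\mathrm{tr}(\mathcal{L})=\sum_{i=1}^n l_{ii\dots i}$, that is,
$$\sum_{\lambda\in\sigma(\mathcal{L})} m(\lambda)\,\lambda=(m-1)^{n-1}\sum_{i=1}^n l_{ii\dots i}.$$
I would justify this by comparing coefficients in $\phi_{\mathcal{L}}(\lambda)=\prod_{j}(\lambda-\lambda_j)$: up to sign, the sum of the roots is the subleading coefficient of the characteristic polynomial, and for a tensor this coefficient equals $(m-1)^{n-1}\,\mathrm{tr}(\mathcal{L})$ \cite{Qi2005}. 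A quick consistency check supports the constant: for a diagonal tensor with diagonal entries $d_1,\dots,d_n$ each $d_i$ is an eigenvalue of multiplicity $(m-1)^{n-1}$, which already reproduces the identity, and for $m=2$ it collapses to the familiar matrix statement that the trace equals the sum of eigenvalues.

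It then remains only to evaluate the trace. By the definition of the normalized Laplacian hypermatrix, all diagonal entries satisfy $l_{ii\dots i}=1$, whence $\mathrm{tr}(\mathcal{L})=n$. Substituting into the trace identity yields $\sum_{\lambda\in\sigma(\mathcal{L})} m(\lambda)\,\lambda=(m-1)^{n-1}\cdot n=n(m-1)^{n-1}$, which is the desired conclusion.

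The principal obstacle is establishing the first–order trace identity rigorously, since it rests on the resultant structure of the characteristic polynomial rather than on anything elementary; once the identity with the correct constant $(m-1)^{n-1}$ is in hand, the remainder of the argument is immediate from the normalization $l_{ii\dots i}=1$.
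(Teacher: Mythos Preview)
Your proposal is correct and follows essentially the same route as the paper: invoke the tensor trace identity $\sum_{\lambda\in\sigma(\mathcal{T})} m(\lambda)\lambda=(m-1)^{n-1}\sum_{i} t_{ii\dots i}$ (the paper cites \cite{Hu2013_3} rather than \cite{Qi2005} for this), then use that every diagonal entry of $\mathcal{L}$ equals $1$ so the trace is $n$. Your additional remarks on the degree of $\phi_{\mathcal{L}}$ and the diagonal-tensor sanity check are nice context but not needed for the argument.
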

\begin{proof}
Since, for any tensor $\mathcal{T} = (t_{i_1i_2\dots i_m}), t_{i_1i_2\dots i_m} \in \mathbb{C},\text{  } 1\leq i_{1}, i_{2}, \dots, i_{m} \leq n,$ $$\sum _{\lambda \in \sigma (\mathcal{T})}m(\lambda)\lambda=(m-1)^{(n-1)} \sum _{i=1}^{n} t_{ii\dots i} \text{ (see \cite{Hu2013_3})}.$$
Hence, we have $\sum_{\lambda \in \sigma(\mathcal{L})} m(\lambda)\lambda =n(m-1)^{n-1}.$
\end{proof}

\begin{thm}
 Let $G=(V, E)$ be a general hypergraph and $A$ be any connectivity hypermatrix of $G$. If $G$ has $r\geq 1$ connected components, $G_{1}, G_{2}, \dots, G_{r}$, such that, $|V(G_{i})|=n_i>1$ and $m.c.e(G_i) = m.c.e(G)$ for each 
 $i\in\{1, 2, \dots, r \}$. Then, as sets, $\sigma(A)=\sigma(A_{1})\cup \sigma(A_{2})\cup\dots \cup \sigma(A_{r}),$ where $A_i$ is the connectivity  hypermatrix of $G_i$.
\end{thm}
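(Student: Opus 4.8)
The plan is to exploit the ``block-diagonal'' structure that the partition into components forces on $A$. Write $[n]=C_{1}\cup\dots\cup C_{r}$ for the partition of the vertex-index set induced by the components $G_{1},\dots,G_{r}$, where $C_{i}$ collects the indices of $G_{i}$. The first step is the structural claim: for every connectivity hypermatrix considered in this paper, $a_{i_{1}i_{2}\dots i_{m}}=0$ unless $\{i_{1},i_{2},\dots,i_{m}\}\subseteq C_{t}$ for a single $t$. Indeed, every off-diagonal nonzero entry is produced by an edge $e$ containing all of $v_{i_{1}},\dots,v_{i_{m}}$, and each edge lies inside one component; the diagonal entries $a_{ii\dots i}$ (the degree $d(v_{i})$ for $L$, the value $1$ for $\mathcal{L}$ and $\mathfrak{L}$, etc.) trivially involve a single component. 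The hypothesis $m.c.e(G_{i})=m.c.e(G)=m$ is used precisely here: it guarantees that $A_{i}$ is again an order-$m$ hypermatrix and that the defining coefficient $s/\alpha$ (which depends only on $m$ and $s=|e|$) is identical whether computed in $G$ or in $G_{i}$. Consequently the principal sub-hypermatrix of $A$ on the indices $C_{t}$ coincides exactly with $A_{t}$, and the degrees entering $L$, $\mathcal{L}$, $\mathfrak{L}$ are unchanged upon passing to the component. The condition $n_{i}>1$ rules out isolated-vertex components, on which the normalized Laplacian is undefined and which cannot realize $m.c.e=m\ge 2$.

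For the inclusion $\sigma(A_{i})\subseteq\sigma(A)$, I would take $\lambda\in\sigma(A_{i})$ with a nonzero eigenvector $x^{(i)}$ solving $A_{i}(x^{(i)})^{m-1}=\lambda (x^{(i)})^{[m-1]}$, and extend it by zeros to $x\in\mathbb{C}^{n}$, putting $x^{(i)}$ on the coordinates of $C_{i}$ and $0$ elsewhere. Then $x\ne 0$, and I would verify the eigenvalue equation coordinate by coordinate. For an index $j\in C_{i}$, every nonzero term $a_{ji_{2}\dots i_{m}}$ has its remaining indices in $C_{i}$, so the $j$-th equation reduces to the $j$-th equation of $A_{i}(x^{(i)})^{m-1}=\lambda(x^{(i)})^{[m-1]}$, which holds by assumption. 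For $j\notin C_{i}$ we have $x_{j}=0$, so the right-hand side vanishes; on the left, every nonzero $a_{ji_{2}\dots i_{m}}$ confines its indices to some $C_{t}\ne C_{i}$, where $x$ is identically zero, so the left-hand side also vanishes. Hence $(\lambda,x)$ is an eigenpair of $A$.

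For the reverse inclusion $\sigma(A)\subseteq\bigcup_{i}\sigma(A_{i})$, I would start from an eigenpair $(\lambda,x)$ of $A$ with $x\ne 0$ and choose a component $C_{i}$ on which the restriction $x^{(i)}:=x|_{C_{i}}$ is nonzero (such $i$ exists since $x\ne 0$). Reading the eigenvalue equation at each $j\in C_{i}$, and again using that the nonzero entries $a_{ji_{2}\dots i_{m}}$ couple only indices inside $C_{i}$, the full system restricts to $A_{i}(x^{(i)})^{m-1}=\lambda(x^{(i)})^{[m-1]}$. Since $x^{(i)}\ne 0$, this exhibits $(\lambda,x^{(i)})$ as an eigenpair of $A_{i}$, so $\lambda\in\sigma(A_{i})$. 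Combining the two inclusions yields the set equality. Since the spectrum is defined (as in Theorem~\ref{spectra}) through $\det(A-\lambda\mathcal{I})=0$, it is the root set of the characteristic polynomial, which coincides with the set of $\lambda$ admitting a nonzero solution of the eigenvalue equation, so the two notions agree here.

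The main obstacle is not the eigenvector bookkeeping but making the hypotheses do their job cleanly: one must check, for each of the four connectivity hypermatrices, that restricting $A$ to a component reproduces that component's own connectivity hypermatrix, which is exactly where $m.c.e(G_{i})=m.c.e(G)$ is indispensable. Without it the component's natural hypermatrix would carry a different order and different coefficients $s/\alpha$, and the restricted system would cease to be an eigenvalue equation for $A_{i}$. I would also flag the ``as sets'' caveat: unlike the matrix case, where the characteristic polynomial of a block-diagonal matrix factors as the product over blocks, the tensor characteristic polynomial (resultant) does not multiply across blocks, so no claim about algebraic multiplicities is available and only the set-level equality can be asserted.
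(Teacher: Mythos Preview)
Your proof is correct but takes a genuinely different route from the paper's. The paper simply invokes Corollary~4.2 of Shao--Shan--Zhang \cite{shaoshan2013} to obtain a factorization of the characteristic polynomial,
\[
\phi_{A}(\lambda)=\prod_{i=1}^{r}\bigl(\phi_{A_{i}}(\lambda)\bigr)^{(m-1)^{n-n_{i}}},
\]
from which the set equality of spectra is immediate. You instead work directly with eigenpairs: extend-by-zero for $\sigma(A_{i})\subseteq\sigma(A)$, restrict-to-a-nonzero-block for the reverse inclusion, exploiting the block-diagonal structure you first establish. Your argument is more elementary and self-contained (and your discussion of why the hypotheses $m.c.e(G_{i})=m.c.e(G)$ and $n_{i}>1$ are needed is more explicit than anything in the paper), whereas the paper's is a one-line citation. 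One correction to your closing remark: the tensor characteristic polynomial \emph{does} factor across blocks---that is exactly the Shao--Shan--Zhang result---only with exponents $(m-1)^{n-n_{i}}$ rather than $1$; so the paper's route actually yields multiplicity information that your eigenvector bookkeeping cannot deliver.
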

\begin{proof}
Using corollary (4.2) of \cite {shaoshan2013} we get $$\phi _{A}(\lambda)=\prod _{i=1}^{r}(\phi _{A}(\lambda))^{(m-1)^{n-n_i}},$$ where $\phi _{A}(\lambda) $ is the characteristic polynomial of the tensor $A$. Therefore, $\sigma(A)=\sigma(A_{1})\cup \sigma(A_{2})\cup\dots \cup \sigma(A_{r})$.
\end{proof}

\section{Discussion and conclusion}
Here, we propose a mathematical framework to construct connectivity matrices for a general hypergraph and also study the eigenvalues of adjacency hypermatrix, Laplacian hypermatrix, normalized Laplacian hypermatrix. This connectivity hypermatrix reconstruction  can be used for further development of  spectral hypergraph theory in many aspects, but, this may not be  quite useful to study  dynamics on  hypergraphs. 

\section*{Acknowledgements}
The authors are thankful to Mithun Mukherjee and Swarnendu Datta for  fruitful discussions. Financial support from Council of Scientific and Industrial Research,  India, Grant no-09/921(0113)/2014-EMR-I is sincerely acknowledged by Bibhash Mondal.


\begin{thebibliography}{999}
\bibitem{Banerjee2008} A. Banerjee, J. Jost. On the spectrum of the normalized graph Laplacian. Linear Algebra and its applications, 428:3015-3022, 2008.

\bibitem{bapat2010} R.B. Bapat. Graphs and Matrices. Springer, 2010.

\bibitem{Chang2008} K.C. Chang, K. Pearson, T. Zhang. Perrone Frobenius theorem for nonnegative tensors. Communications in Mathematical Sciences, 6(2):507-520, 2008.

\bibitem{Chang2009} K.C. Chang, K. Pearson, T. Zhang. On eigenvalue problems of real symmetric tensors. Journal of Mathematical Analysis and Applications, 350:416-422, 2009.

\bibitem{chung1993} F.R. Chung. The Laplacian of a hypergraph. Discrete Mathematics and Theoritical Computer Science, 1993.

\bibitem{chung1997} F.R. Chung. Spectral graph theory. American Mathematical Society, 1997.

\bibitem{Cooper2012} J. Cooper, A. Dutle. Spectra of uniform hypergraphs. Linear Algebra and its applications, 436:3268-3292, 2012.

\bibitem{Hu2013} S. Hu, L. Qi. The Laplacian of a uniform hypergraph. Journal of Combinatorial Optimization,29(2):331-366, 2015.

\bibitem{Hu2013_2}  S. Hu, L. Qi, J-Y Shao. Cored hypergraphs, power hypergraphs and their Laplacian H-eigenvalues.  Linear Algebra and its Applications, 439:2980-2998, 2013.

\bibitem{Hu2014} S. Hu, L. Qi. The eigenvectors associated with the zero eigenvalues of the Laplacian and signless Laplacian tensors of a uniform hypergraph, Discrete Applied Mathematics,
 169:140-151, 2014.
 
 \bibitem{Hu2015}   S. Hu, L. Qi, J. Xie. The largest Laplacian and signless Laplacian H-eigenvalues of a uniform hypergraph. Linear Algebra and its Applications, 
469:1-27, 2015.

 \bibitem{Hu2013_3} S. Hu, Z. Huang, C. Ling, L. Qi.On determinants and eigenvalue theory of tensors. Journal of Symbolic Computation, 50:508-531, 2013.
 
\bibitem{Li2013} G.Li, L.Qi, G.Yu. The Z-eigenvalues of a symmetric tensor and its application to
spectral hypergraph theory. Numerical Linear Algebra with Applications, 20:1001-1029, 2013.


\bibitem{Ng2009} M. Ng, L. Qi, G. Zhou. Finding the largest eigenvalue of a nonnegetive tensor. SIAM Journal on Matrix Analysis and Applications, 31(3):1090-1099, 2009.

\bibitem{Pearson2014} K.J. Pearson, T. Zhang. On spectral hypergraph theory of the adjacency tensor. Graphs and Combinatorics, 30:1233-1248, 2014.

\bibitem{Pearson2015} K.J. Pearson. Spectral hypergraph theory of the adjacency hypermatrix and matroids. Linear Algebra and its Applications, 465:176-187, 2015.

\bibitem{Qi2005}  L. Qi. Eigenvalues of a real supersymmetric tensor. Journal of Symbolic Computation, 40:1302-1324, 2005.

\bibitem{Qi2014} L. Qi. $H^+ $ eigenvalues of laplacian and signless laplacian tensor. Communications in Mathematical Sciences, 12(6):1045-1064,2014.

\bibitem{Qi2014_2} L. Qi, J. Shao, Q. Wang. Regular uniform hypergraphs, s-cycles, s-paths and their largest Laplacian H-eigenvalues. Linear Algebra and its Applications, 
 443:215-227, 2014.
 
\bibitem{Shao2013} J. Shao. A general product of tensors with applications. Linear Algebra and its applications, 439:2350-2366, 2013.

\bibitem{shaoshan2013} J. Shao, H. Shan, L. Zhang. On some properties of the determinants of tensors. Linear Algebra and its applications, 439:3057-3069, 2013.

\bibitem{Voloshin} V.I. Voloshin. Introduction to Graph and Hypergraph Theory, Nova Science Publishers Inc, 2012.

\bibitem{Yang2010} Y. Yang, Q. Yang. Further results for perron-frobenious theorem for nonnegative tensors. SIAM Journal on Matrix Analysis and Applications, 31(5):2517-2530, 2010.

\bibitem{YangYang20101} Y. Yang, Q. Yang. Further results for perron-frobenious theorem for nonnegative tensors II. SIAM Journal on Matrix Analysis and Applications, 32(4):1236-1250, 2011.


\end{thebibliography}
\end{document}